\definecolor{verylight}{gray}{0.97}
\definecolor{light}{gray}{0.9}
\definecolor{medium}{gray}{0.85}
\definecolor{dark}{gray}{0.6}
 \def\NZQ{\mathbb}               % the font for N,Z,Q,R,C
 \def\QQ{{\NZQ Q}}
 \def\ZZ{{\NZQ Z}}
 \def\FF{{\NZQ F}}
 \def\frk{\mathfrak}               % font for "Fraktur"
 \def\mm{{\frk m}}
 \def\G{{\mathcal G}}
 \def\P{{\mathcal P}}
 \def\xb{{\mathbf x}}
 \def\opn#1#2{\def#1{\operatorname{#2}}} % to make operators
 \opn\chara{char} \opn\length{\ell} \opn\pd{pd} \opn\rk{rk}
 \opn\projdim{proj\,dim} \opn\injdim{inj\,dim} \opn\rank{rank}
 \opn\depth{depth} \opn\grade{grade} \opn\height{height}
 \opn\embdim{emb\,dim} \opn\codim{codim}
 \opn\Tr{Tr} \opn\bigrank{big\,rank}
 \opn\superheight{superheight}\opn\lcm{lcm}
 \opn\trdeg{tr\,deg}%\emph{
 \opn\reg{reg} \opn\lreg{lreg} \opn\ini{in} \opn\lpd{lpd}
 \opn\size{size} \opn\sdepth{sdepth}
 \opn\link{link}\opn\fdepth{fdepth}\opn\lex{lex}
 \opn\tr{tr}
 \opn\type{type}
 \opn\gap{gap}
 \opn\arithdeg{arith-deg}
 \opn\astab{astab}
  \opn\dstab{dstab}
  \opn\pol{pol}
  \opn\mat{mat}
  \opn\indmat{indmat}
  \opn\sat{sat}
 \opn\div{div} \opn\Div{Div} \opn\cl{cl} \opn\Cl{Cl}
 \opn\Spec{Spec} \opn\Supp{Supp} \opn\supp{supp} \opn\Sing{Sing}
 \opn\Ass{Ass} \opn\Min{Min}\opn\Mon{Mon}
 \opn\Ann{Ann} \opn\Rad{Rad} \opn\Soc{Soc}
 \opn\Im{Im} \opn\Ker{Ker} \opn\Coker{Coker} \opn\Am{Am}
 \opn\Hom{Hom} \opn\Tor{Tor} \opn\Ext{Ext} \opn\End{End}
 \opn\Aut{Aut} \opn\id{id}
 \opn\nat{nat}
 \opn\pff{pf}%   \pf exists already
 \opn\Pf{Pf} \opn\GL{GL} \opn\SL{SL} \opn\mod{mod} \opn\ord{ord}
 \opn\Gin{Gin} \opn\Hilb{Hilb}\opn\sort{sort}
 \opn\PF{PF}\opn\Ap{Ap}
 \opn\mult{mult}
 \opn\aff{aff}
 \opn\relint{relint} \opn\st{st}
 \opn\lk{lk} \opn\cn{cn} \opn\core{core} \opn\vol{vol}  \opn\inp{inp} \opn\nilpot{nilpot}
 \opn\link{link} \opn\star{star}\opn\lex{lex}\opn\set{set}
 \opn\width{wd}
 \opn\Fr{F}
 \opn\QF{QF}
 \opn\G{G}
 \opn\type{type}\opn\res{res}
 \opn\conv{conv}
 \opn\gr{gr}
 \def\pot#1#2{#1[\kern-0.28ex[#2]\kern-0.28ex]}
 \opn\dirlim{\underrightarrow{\lim}}
 \opn\inivlim{\underleftarrow{\lim}}
 \let\union=\cup
 \let\sect=\cap
 \let\iso=\cong
 \let\Union=\bigcup
 \let\Sect=\bigcap
 \let\Dirsum=\bigoplus
 \let\to=\rightarrow
 \def\Implies{\ifmmode\Longrightarrow \else
         \unskip${}\Longrightarrow{}$\ignorespaces\fi}
 \def\implies{\ifmmode\Rightarrow \else
         \unskip${}\Rightarrow{}$\ignorespaces\fi}
 \def\iff{\ifmmode\Longleftrightarrow \else
         \unskip${}\Longleftrightarrow{}$\ignorespaces\fi}
 \newtheorem{Theorem}{Theorem}[section]
 \newtheorem{Lemma}[Theorem]{Lemma}
 \newtheorem{Corollary}[Theorem]{Corollary}
 \newtheorem{Proposition}[Theorem]{Proposition}
 \newtheorem{Example}[Theorem]{Example}
 \newtheorem{Definition}[Theorem]{Definition}
 \let\epsilon\varepsilon
 \let\kappa=\varkappa
 \def\qed{\ifhmode\textqed\fi
       \ifmmode\ifinner\quad\qedsymbol\else\dispqed\fi\fi}
 \def\textqed{\unskip\nobreak\penalty50
        \hskip2em\hbox{}\nobreak\hfil\qedsymbol
        \parfillskip=0pt \finalhyphendemerits=0}
 \def\dispqed{\rlap{\qquad\qedsymbol}}
 \opn\dis{dis}
 \def\pnt{{\raise0.5mm\hbox{\large\bf.}}}
 \opn\Lex{Lex}
\begin{document}

\title {The saturation number of powers of graded ideals}

\author {J\"urgen Herzog, Shokoufe Karimi and Amir Mafi}

\address{J\"urgen Herzog, Fachbereich Mathematik, Universit\"at Duisburg-Essen, Campus Essen, 45117
Essen, Germany} \email{juergen.herzog@uni-essen.de}

\address{Sh. Karimi, Department of Mathematics, University of Kurdistan, P.O. Box: 416, Sanandaj,
Iran.}
\email{sh.karimi@sci.uok.ac.ir }

\address{A. Mafi, Department of Mathematics, University of Kurdistan, P.O. Box: 416, Sanandaj,
Iran.}
\email{a\_mafi@ipm.ir}

\dedicatory{ }

\begin{abstract}
Let $S=K[x_1,\ldots,x_n]$ be the polynomial ring in $n$ variables over a field $K$ with maximal ideal $\frak{m}=(x_1,...,x_n)$,  and let $I$ be a graded ideal of $S$. In this paper, we define the  saturation number $\sat(I)$ of $I$ to be  the smallest non-negative integer $k$ such that $I:\mm^{k+1}= I:\mm^k$.  We show that $f(k)=\sat(I^k)$ is linearly bounded. Furthermore, we show that $\sat(I^k)=k$ if $I$ is a principal Borel ideal and prove that $\sat(I_{d,n}^k) =\max\{l\:\; (kd-l)/(k-l) \leq n, \   l\leq k\},$ where  $I_{d,n}$ is the squarefree Veronese ideal generated in degree $d$. A general strategy  is given to compute  $\sat(I)$ when $I$ is a polymatroidal ideal.
\end{abstract}

\thanks{This paper was done while the first author
was visiting University of Kurdistan, Iran. He is thankful to the University
for the hospitality. }

\subjclass[2010]{Primary 13F20; Secondary  13H10, 05E40}
%		13H10   	Special types (Cohen-Macaulay, Gorenstein, Buchsbaum, etc.)
%		13D02   	Syzygies, resolutions, complexes
%		05E40   	Combinatorial aspects of commutative algebra
%		16S36   	Ordinary and skew polynomial rings and semigroup rings

%		14M25   	Toric varieties, Newton polyhedra [See also 52B20]
%		13A02   	Graded rings
%		13F20   	Polynomial rings and ideals; rings of integer-valued polynomials
%		13A18   	Valuations and their generalizations
%		06A11   	Algebraic aspects of posets

\keywords{graded ideals, polymatroidal ideals, saturation number}

\maketitle

%\setcounter{tocdepth}{1}
%\tableofcontents
\section*{Introduction}

Throughout this paper, we assume that $S=K[x_1,\ldots,x_n]$ is the polynomial ring in $n$ variables over a field $K$ with the unique graded maximal ideal $\frak{m}=(x_1,\ldots,x_n)$, and that $I$ is a graded ideal of $S$. If $I$ is a monomial ideal, then we denote by $G(I)$ the unique minimal set of monomial generators of $I$.

The ideal
\[
I^{\sat}:= I:\mm^\infty =\Union_{k\geq 0}I:\mm^k
\]
is called the {\em saturation} of $I$. Since $I\subseteq  I:\mm\subseteq I:\mm^2\subseteq \ldots$ and $S$ is Noetherian, there exists an integer $k$ such that $I:\mm^{k+1}= I:\mm^k$.  This implies that $\lambda(I^{\sat}/I)<\infty$. Here $\lambda(M)$ denotes the length of a module $M$.

In the first part of this paper we introduce  $\sat(I)$  of $I$ which is defined to be the smallest non-negative integer $k$ such that $I:\mm^{k+1}= I:\mm^k$. We show that $\sat(I)\leq \reg(I)$ and obtain from this that $\sat(I^k)$ is bounded by a linear function of $k$. It would be interesting to have this linear bound also for regular local rings. For monomial ideals one obtains an even better result. Indeed, we show that there exists a quasi-linear function $f$ such that $f(k)=\sat(I^k)$ for $k\gg 0$ if all powers of $I$ have linear resolution. This is a consequence of the fact that $A=\Dirsum_{k\geq 0} (I^k)^{\sat}$ is finitely generated when $I$ is a monomial ideal.

In the second part we study the saturation number of special classes of polymatroidal ideals.
A polymatroidal ideal is a monomial ideal, generated in a single degree $d$ satisfying the following condition: for all monomials $u,v\in G(I)$ with $\deg_{x_i}(u)>\deg_{x_i}(v)$, there exists an index $j$ such that $\deg_{x_j}(v)>\deg_{x_j}(u)$ and $x_j(u/x_i)\in I$ (see \cite{HH} or \cite{HH2}). A squarefree polymatroidal ideal is called a matroidal ideal. Among the stable ideals, the principal Borel ideals are polymatroidal.  The saturation number for this class of ideals behaves particularly nice. We show that $\sat(I^k)=k$ for all $k$.  The squarefree principal ideals are matroidal. Here the situation is more complicated. In this paper only  consider   the squarefree Veronese ideals. We denote by $I_{d,n}$ (see \cite{HH} or \cite{HRV}) the squarefree Veronese ideal of degree $d$ in the variables $x_1,...,x_n$, and  show that $\sat(I_{d,n}^k) =\max\{l\:\; (kd-l)/(k-l) \leq n,\ l\leq k\}$ for all $k$. According to \cite[Proposition 5]{HV} any polymatroidal ideal is of intersection type. This fact,  allows us to compute the saturation number of a polymatroidal ideal in each concrete case. Moreover, it can be shown that if $I$ is any monomial ideal of intersection type with $\Ass(I)=\Ass^\infty(I)$, then $\sat(I^k)=\sat(I)k$. Here $\Ass^\infty(I)= \Union_{k\geq 0}\Ass(I^k)$ .

For  unexplained notation or terminology, we refer the reader to \cite{HH2} and \cite{BH}. Several explicit examples were  performed with help of the computer algebra systems Macaulay2 \cite{GS} and CoCoA \cite{AB}.

\section{Upper bounds for the saturation number of an ideal and its powers}
We start this section by the following definition.

\begin{Definition}{\em
The number $\sat(I)=\min\{k\:\; I:\mm^{k+1}= I:\mm^k\}$ is called the {\em saturation  number} of $I$. }
\end{Definition}

It is clear that $\sat(I)=0$ if and only if $\depth(S/I)>0$. In particular, $\sat(I)=0$ if $I$ is a squarefree  monomial ideal.

Let $M$ be a finitely generated graded $S$-module.  We set
\[
\alpha(M)=\min\{k\:\; M_k\neq 0\}\quad\text{and}\quad \beta(M)=\max\{k\:\; M_k\neq 0\},
\]
if minimum and maximum exist and otherwise we set $-\infty$ and $\infty$ respectively.
Note that if $M$ is a finite length graded $S$-module, then $\beta(M)$ is the largest degree of a socle element of $M$. For a finite length  graded $S$-module $M$, we set $\sigma(M)=\beta(M)-\alpha(M)+1$ and $\gamma(M)=\min\{k\:\; 0:_M \mm^k=M\}$.

\begin{Lemma}
\label{sigma}
Let $M\neq 0$ be a graded $S$-module of finite length.  Then
\[
\gamma(M)\leq \sigma(M).
\]
\end{Lemma}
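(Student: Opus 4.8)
The claim is that $\gamma(M) \le \sigma(M)$ for a nonzero finite-length graded $S$-module $M$, where $\sigma(M) = \beta(M) - \alpha(M) + 1$ and $\gamma(M) = \min\{k : 0 :_M \mm^k = M\}$. The strategy is to show that $\mm^{\sigma(M)} M = 0$, which is equivalent to $0 :_M \mm^{\sigma(M)} = M$, and this immediately gives $\gamma(M) \le \sigma(M)$. So the whole proof reduces to the degree-counting statement: multiplying $M$ by $\mm$ exactly $\sigma(M)$ times kills it.

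First I would observe that since $\mm$ is generated by the linear forms $x_1, \dots, x_n$, we have $\mm^j M = \sum_{|a| = j} x^a M$, and each monomial $x^a$ of degree $j$ shifts degrees up by $j$; hence $(\mm^j M)_t = \sum_{|a|=j} x^a M_{t-j}$, which forces $(\mm^j M)_t = 0$ whenever $t - j < \alpha(M)$, i.e. whenever $t < \alpha(M) + j$. On the other hand $M_t = 0$ for $t > \beta(M)$. Now take $j = \sigma(M) = \beta(M) - \alpha(M) + 1$. For any degree $t$: if $t > \beta(M)$ then $(\mm^j M)_t \subseteq M_t = 0$; if $t \le \beta(M)$ then $t < \beta(M) + 1 = \alpha(M) + j$, so by the previous observation $(\mm^j M)_t = 0$. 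In all cases $(\mm^j M)_t = 0$, hence $\mm^{\sigma(M)} M = 0$.

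From $\mm^{\sigma(M)} M = 0$ we get $0 :_M \mm^{\sigma(M)} = M$, and therefore $\gamma(M) = \min\{k : 0 :_M \mm^k = M\} \le \sigma(M)$, which is the assertion. One should note that the minimum defining $\gamma(M)$ is attained (the set is nonempty precisely because $M$ has finite length, so some power of $\mm$ annihilates it — which is also re-proved by the argument above), and that $\sigma(M)$ is a well-defined positive integer since $M \ne 0$ has finite length so both $\alpha(M)$ and $\beta(M)$ are finite with $\alpha(M) \le \beta(M)$.

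There is essentially no serious obstacle here: the only thing to be careful about is the bookkeeping with the degree shift — making sure the index $\sigma(M) = \beta(M) - \alpha(M) + 1$ (with the ``$+1$'') is exactly the threshold at which the support of $\mm^j M$ (which lives in degrees $\ge \alpha(M) + j$) clears the support of $M$ (which lives in degrees $\le \beta(M)$). The inequality $\alpha(M) + j > \beta(M)$ is $j > \beta(M) - \alpha(M)$, i.e. $j \ge \beta(M) - \alpha(M) + 1 = \sigma(M)$, so $\sigma(M)$ is precisely the first exponent that works, which incidentally shows the bound is sharp in general.
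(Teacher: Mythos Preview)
Your proof is correct. The paper's argument is different: it proceeds by induction on $\gamma(M)$, passing to the quotient $M/(0:_M\mm)$ and using that $\gamma$ drops by exactly one while $\sigma$ drops by at least one (since $\beta(M/(0:_M\mm))\le\beta(M)-1$ and $\alpha(M/(0:_M\mm))\ge\alpha(M)$). Your approach is more direct: you simply observe that $\mm^{\sigma(M)}M$ is supported in degrees $\ge\alpha(M)+\sigma(M)=\beta(M)+1$, hence vanishes, and conclude immediately. The inductive proof has the minor advantage of tracking how close the inequality is to equality through the filtration by socles, but your degree-shift argument is shorter and isolates the essential reason the bound holds. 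One small remark: your closing comment that ``$\sigma(M)$ is precisely the first exponent that works'' only shows it is the first exponent for which \emph{this} argument applies; to say the bound is sharp you should exhibit a module with $\gamma(M)=\sigma(M)$, e.g.\ $M=S/\mm^k$.
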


\begin{proof}
We proceed by induction on $\gamma(M)$. The assertion is trivial if  $\gamma(M)=1$. Let $\gamma(M)>1$.  Then  $\gamma(M/0:_M\mm)=\gamma(M)-1$, $\beta(M/0:_M\mm)\leq \beta(M)-1$ and $\alpha( M/0:_M\mm)\geq \alpha(M)$. Thus, together with the induction hypothesis we obtain
\[
\gamma(M)-1=\gamma(M/0:_M\mm)\leq \sigma(M/0:_M\mm)\leq\sigma(M)-1,
\]
as desired.
\end{proof}

Applied to the saturation of an ideal we obtain

\begin{Corollary}
\label{better}
Let $I\subset S$ be a graded ideal. Then
\[
\sat(I)\leq \sigma(I^{\sat}/I).
\]
\end{Corollary}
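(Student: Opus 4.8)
The plan is to apply Lemma \ref{sigma} to the module $M = I^{\sat}/I$, which by the discussion in the introduction is a graded $S$-module of finite length. The key observation I would make is that $\gamma(I^{\sat}/I)$ coincides with $\sat(I)$. To see this, note that for any $k\geq 0$ we have $0 :_{I^{\sat}/I} \mm^k = (I:\mm^k)/I$, since $I:\mm^k \subseteq I^{\sat}$. Hence the condition $0:_{I^{\sat}/I}\mm^k = I^{\sat}/I$ is equivalent to $I:\mm^k = I^{\sat}$, i.e.\ to $I:\mm^k = I:\mm^{k+1}$ (using that the chain $I:\mm^k$ stabilizes exactly at its eventual value $I^{\sat}$, and is strictly increasing before that). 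Therefore $\gamma(I^{\sat}/I) = \min\{k : I:\mm^k = I^{\sat}\} = \sat(I)$.

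Once this identification is in place, Lemma \ref{sigma} immediately gives
\[
\sat(I) = \gamma(I^{\sat}/I) \leq \sigma(I^{\sat}/I),
\]
which is the claimed inequality. One should also dispose of the trivial case $I^{\sat}/I = 0$ (equivalently $\depth(S/I)>0$, i.e.\ $\sat(I)=0$): here $\sigma$ is not defined by the stated convention, but the inequality $\sat(I)\leq\sigma(I^{\sat}/I)$ is to be read as holding vacuously, or one simply restricts to the case $I^{\sat}\neq I$ where $M\neq 0$ and Lemma \ref{sigma} applies directly.

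The main point requiring a little care is the equivalence $I:\mm^k = I^{\sat} \iff I:\mm^k = I:\mm^{k+1}$, i.e.\ that the stabilization index of the ascending chain equals $\min\{k : I:\mm^k = I:\mm^\infty\}$; this follows from the elementary fact that an ascending chain of submodules of a Noetherian module which satisfies $N_k = N_{k+1}$ at some stage is constant from that stage on. This is routine, so I do not expect any genuine obstacle here — the corollary is essentially a direct translation of Lemma \ref{sigma} through the dictionary $\gamma(I^{\sat}/I) = \sat(I)$.
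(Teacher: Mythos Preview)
Your proposal is correct and is exactly the intended argument: the paper presents the corollary as an immediate application of Lemma~\ref{sigma} to $M=I^{\sat}/I$, and the identification $\gamma(I^{\sat}/I)=\sat(I)$ together with the handling of the trivial case $I^{\sat}=I$ that you spell out are precisely the details left implicit in the paper.
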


In general the inequality $\sat(I)\leq \sigma(I^{\sat}/I)$  may be strict. The following example was communicated to us by Dancheng Lu and Lizhong Chu: \\
let $I=(xy,yz,zu,uv, vx^2)^5$.  Then $\sat(I)=2$ and $\gamma(I)\geq 4$.

\medskip
We denote by $\reg(M)$ the Castelnuovo-Mumford regularity of a finitely generated graded $S$-module.

\begin{Corollary}
\label{reg}
Let $I\subset S$ be a graded ideal. Then
$
\sat(I)\leq \reg(I).
$
\end{Corollary}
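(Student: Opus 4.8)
The plan is to deduce $\sat(I) \leq \reg(I)$ from Corollary~\ref{better}, which already gives $\sat(I) \leq \sigma(I^{\sat}/I)$, so it suffices to show $\sigma(I^{\sat}/I) \leq \reg(I)$. Since $M := I^{\sat}/I$ has finite length, we have $\sigma(M) = \beta(M) - \alpha(M) + 1$, and the key is to control both extremes by the regularity of $I$. First I would note the short exact sequence $0 \to I \to I^{\sat} \to M \to 0$; since $M$ has finite length, $\reg(M) = \beta(M)$ (the top nonzero degree, which is the top socle degree), and from the long exact sequence in local cohomology (or the standard behavior of regularity in short exact sequences) one gets $\beta(M) = \reg(M) \leq \max\{\reg(I), \reg(I^{\sat}) + 1\}$. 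Since $I^{\sat}$ is the saturation of $I$, it satisfies $\reg(I^{\sat}) \leq \reg(I)$ — indeed saturating only affects $H^0_{\mm}$ and can only decrease regularity — but I must be a little careful because $\reg(I^{\sat})+1$ could a priori exceed $\reg(I)$; the cleaner route is to use that $H^i_{\mm}(I) = H^i_{\mm}(I^{\sat})$ for $i \geq 1$ and $H^0_{\mm}(I^{\sat}) = 0$, while $H^0_{\mm}(M) = M$ fits into $0 \to H^0_{\mm}(I) \to M \to H^0_{\mm}(I^{\sat}) = 0$ shifted appropriately — actually $M$ itself is not $H^0_{\mm}$ of anything directly, so I would instead argue via $H^1$.

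Here is the cleaner approach. Apply local cohomology to $0 \to I \to I^{\sat} \to M \to 0$. Since $M$ has finite length, $H^0_{\mm}(M) = M$ and $H^i_{\mm}(M) = 0$ for $i > 0$. Since $I^{\sat}$ is saturated, $H^0_{\mm}(I^{\sat}) = 0$. The long exact sequence gives $0 \to H^0_{\mm}(I) \to 0 \to M \to H^1_{\mm}(I) \to H^1_{\mm}(I^{\sat}) \to 0$, so in particular $H^0_{\mm}(I) = 0$ (wait — that's false in general; $H^0_{\mm}(I) = 0$ always since $I$ is a submodule of the domain $S$, so this is fine and consistent), and we get an injection $M \hookrightarrow H^1_{\mm}(I)$. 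Therefore $\beta(M) \leq \beta(H^1_{\mm}(I)) = \max\{j : H^1_{\mm}(I)_j \neq 0\} \leq \reg(I) - 1 + 1 = \reg(I)$, using the definition $\reg(I) = \max_i \{ \beta(H^i_{\mm}(I)) + i \}$ with $i = 1$. For the lower bound on $\alpha(M)$: since $I \subseteq S$ is a graded ideal generated in degrees $\geq 1$ and $M = I^{\sat}/I$ is a subquotient, $\alpha(M) \geq \alpha(I) \geq 1$; more to the point, $\alpha(M) \geq 1$ suffices together with $\beta(M) \leq \reg(I)$ to give $\sigma(M) = \beta(M) - \alpha(M) + 1 \leq \reg(I) - 1 + 1 = \reg(I)$.

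So the steps in order are: (1) invoke Corollary~\ref{better} to reduce to bounding $\sigma(I^{\sat}/I)$; (2) set $M = I^{\sat}/I$, observe $M$ has finite length so $H^i_{\mm}(M)$ vanishes for $i>0$ and equals $M$ for $i=0$; (3) write the local cohomology long exact sequence of $0 \to I \to I^{\sat} \to M \to 0$, use $H^0_{\mm}(I^{\sat}) = 0$ to extract an injection $M \hookrightarrow H^1_{\mm}(I)$; (4) conclude $\beta(M) \leq \beta(H^1_{\mm}(I)) \leq \reg(I) - 1$, hence actually $\beta(M) \leq \reg(I) - 1$ if one is careful, and in any case $\sigma(M) \leq \beta(M) - \alpha(M) + 1 \leq \reg(I)$ once we note $\alpha(M) \geq 1$; this may even sharpen to $\sat(I) \leq \reg(I)$ with room to spare, but $\reg(I)$ is the stated bound. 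The main obstacle — really the only subtle point — is getting the indexing of regularity versus the top nonzero degree of $H^1_{\mm}(I)$ exactly right: one must remember $\reg(I) \geq \beta(H^1_{\mm}(I)) + 1$, so $\beta(M) \leq \beta(H^1_{\mm}(I)) \leq \reg(I) - 1$, and then combine with $\alpha(M) \geq \alpha(I^{\sat}) \geq 1$ (strictly, $\alpha(M)$ could be as small as $\alpha(I)$, but $I$ has no constants so this is $\geq 1$) to land at $\sigma(M) \leq \reg(I) - 1$, which is even a touch stronger than claimed; stating it as $\sat(I) \leq \reg(I)$ is then safe.
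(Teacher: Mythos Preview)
Your approach is essentially the paper's: invoke Corollary~\ref{better}, then bound $\beta(I^{\sat}/I)$ by $\reg(I)-1$. The paper obtains this bound by citing that $\reg(I^{\sat}/I)\leq \reg(S/I)$; you instead run the local cohomology long exact sequence for $0\to I\to I^{\sat}\to M\to 0$ and read off $M\hookrightarrow H^1_{\mm}(I)$, which is a perfectly good direct proof of the same inequality (and is equivalent to observing $M\cong H^0_{\mm}(S/I)$).

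One small slip: your claim $\alpha(M)\geq \alpha(I^{\sat})\geq 1$ fails when $I$ is $\mm$-primary, since then $I^{\sat}=S$ and $\alpha(M)=0$. This does not damage the stated result --- with $\alpha(M)\geq 0$ and $\beta(M)\leq \reg(I)-1$ you still get $\sigma(M)\leq \reg(I)$ --- but your ``touch stronger'' conclusion $\sigma(M)\leq \reg(I)-1$ is not true in that case (e.g.\ $I=\mm^d$ gives $\sigma(M)=d=\reg(I)$). So drop the sharpened claim and use $\alpha(M)\geq 0$.
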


\begin{proof}
It follows from  Corollary~\ref{better} that $\sat(I)\leq \beta(I)+1$. Since $I^{\sat}/I$ is of finite length, we have $\beta(I)=\reg(I^{\sat}/I)$. By \cite{CH} and \cite{E}, $\reg(I^{\sat}/I)\leq \reg(S/I)=\reg(I)-1$. The desired conclusion follows.
\end{proof}

The following result follows by Corollary~\ref{reg} and~\cite{Ko} (see also ~\cite{CHT}).
\begin{Corollary}
\label{linearbound}
Let $I\subset S$ be a graded ideal. Then there exists a linear function $f$ such that $\sat(I^k)\leq f(k)$ for all $k\geq 0$.
\end{Corollary}

Observe that $I=I^{\sat}\sect Q$ where $Q$ is an $\mm$-primary ideal. We call $I$ {\em special} if $Q$ is a power of $\mm$. In particular, $I^{\sat}\sect \mm^d$ is special for all $d$. In the sequel we will use

\begin{Proposition}
\label{chinesecopy}
Let $I\subset S$ a graded ideal with $I=I^{\sat}$. Then
 \[
\sat(I\cap \mm^d) = \left\{
\begin{array}{ll}
0,  &   \text{if $d\leq \alpha(I)$,}\\
d-\alpha(I), & \text{if $d\geq \alpha(I)$.}\\
\end{array}
\right. \]
\end{Proposition}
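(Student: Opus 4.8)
The plan is to analyze $J := I \cap \mm^d$ directly via its saturation colon ideals, exploiting the hypothesis $I = I^{\sat}$. First I would establish the two trivial facts that $J^{\sat} = I$ and $\alpha(J) = \max\{d, \alpha(I)\}$. For the former: since $I = I^{\sat}$ and $\mm^d$ is $\mm$-primary, the primary decomposition gives $J = I \cap \mm^d$ with $J^{\sat} = (I \cap \mm^d):\mm^\infty = I:\mm^\infty = I$ (the $\mm^d$ component saturates to $S$). For the latter: the smallest degree of an element of $I \cap \mm^d$ is $\alpha(I)$ if $d \le \alpha(I)$, and is $d$ if $d \ge \alpha(I)$, because in degrees $\ge \alpha(I)$ the ideal $I$ already contains a nonzero element, so intersecting with $\mm^d$ just truncates below degree $d$. (Here one uses that $I$, being saturated of positive codimension or all of $S$, has $I_j \neq 0$ for all $j \ge \alpha(I)$; if $I = S$ the statement reads $\alpha(I) = 0$ and $J = \mm^d$, which is consistent.)

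Next I would handle the case $d \le \alpha(I)$: here $\mm^d \supseteq I$ in all relevant degrees — more precisely $I \subseteq \mm^d$ since every generator of $I$ has degree $\ge \alpha(I) \ge d$ — so $J = I$, hence $J$ is already saturated and $\sat(J) = 0$, matching the claimed value.

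The substantive case is $d \ge \alpha(I)$, where I expect the main work to lie. By Corollary~\ref{better} we have $\sat(J) \le \sigma(J^{\sat}/J) = \beta(I/J) - \alpha(I/J) + 1$. Now $I/J = I/(I \cap \mm^d)$ is supported exactly in degrees $\alpha(I), \alpha(I)+1, \dots, d-1$: in those degrees $I_j \neq 0$ (as $j \ge \alpha(I)$) while $(\mm^d)_j = 0$, and in degrees $\ge d$ we have $I_j = J_j$. Thus $\alpha(I/J) = \alpha(I)$ and $\beta(I/J) = d - 1$, giving the upper bound $\sat(J) \le (d-1) - \alpha(I) + 1 = d - \alpha(I)$. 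For the reverse inequality I must show $J : \mm^{d - \alpha(I) - 1} \neq J : \mm^{d-\alpha(I)} = I$, equivalently that there is an element of $I \setminus J$ annihilated by $\mm^{d-\alpha(I)}$ modulo $J$ but not by $\mm^{d-\alpha(I)-1}$. The natural candidate is a nonzero $f \in I_{\alpha(I)}$: then $\mm^{d - \alpha(I)} f \subseteq I_{\ge d} = J_{\ge d} \subseteq J$, so $f \in J : \mm^{d-\alpha(I)}$; but $\mm^{d-\alpha(I)-1} f$ lives in degree $d-1$ where it is a nonzero subspace of $I_{d-1}$ (since multiplication by powers of $\mm$ is injective on the nonzero graded pieces of $I$ as a subideal of the domain $S$ — concretely, $x_1^{\,d-\alpha(I)-1} f \neq 0$) while $J_{d-1} = 0$, so $f \notin J : \mm^{d-\alpha(I)-1}$. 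This shows $\sat(J) \ge d - \alpha(I)$, and together with the upper bound completes the proof.

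The main obstacle, and the one point requiring care, is the claim that $I_j \neq 0$ for every $j \ge \alpha(I)$ when $I = I^{\sat}$; this is what makes $\alpha(I/J) = \alpha(I)$ and powers this entire graded-piece-counting argument. When $\codim(S/I) \ge 1$ it follows because $S/I$ has a nonzerodivisor $\ell$ of degree $1$ (over an infinite field, or after a harmless field extension that does not affect $\sat$), so multiplication by $\ell$ embeds $I_j$ into $I_{j+1}$; alternatively one can argue directly from the primary decomposition of $I^{\sat}$. When $I = S$ the proposition is trivially true with $\alpha(I) = 0$. I would phrase the proof so that this structural observation is isolated as the first lemma-step, after which the degree bookkeeping and the two inequalities via Corollary~\ref{better} are routine.
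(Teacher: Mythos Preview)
Your argument is correct, but the route differs from the paper's. The paper proceeds by induction on $d-\alpha(I)$, using the single identity $(I\cap\mm^d):\mm = I\cap\mm^{d-1}$, which follows immediately from $I:\mm=I$ (saturation hypothesis) and $\mm^d:\mm=\mm^{d-1}$; one then gets $\sat(I\cap\mm^d)=\sat(I\cap\mm^{d-1})+1$ and the induction runs. Your approach instead splits into two inequalities: an upper bound via Corollary~\ref{better} applied to $J^{\sat}/J=I/J$, and a lower bound by exhibiting a nonzero $f\in I_{\alpha(I)}$ with $f\in J:\mm^{d-\alpha(I)}\setminus J:\mm^{d-\alpha(I)-1}$. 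Both are clean; the paper's induction is shorter and yields the exact colon ideals $(I\cap\mm^d):\mm^j=I\cap\mm^{d-j}$ along the way, while your method makes explicit that equality holds in Corollary~\ref{better} for this family.

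One comment: the ``main obstacle'' you flag, namely $I_j\neq 0$ for all $j\geq\alpha(I)$, is not an obstacle at all and needs neither the saturation hypothesis nor a field extension. Since $S$ is a domain, multiplying any $0\neq f\in I_{\alpha(I)}$ by $x_1^{\,j-\alpha(I)}$ gives a nonzero element of $I_j$. You already use exactly this observation in your lower-bound step; the detour through a linear nonzerodivisor on $S/I$ is unnecessary and should be dropped.
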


\begin{proof}
If $d\leq \alpha(I)$, then $I\cap \mm^d=I$, and since $I$ is saturated we have $\sat(I\cap \mm^d)=0$. Now let $d\geq \alpha(I)$. We proceed by induction on $d-\alpha(I)$. If $d-\alpha(I)=0$, then the assertion is trivial. Now let $d-\alpha(I)>0$. We claim that $(I\sect \mm^d):\mm=I\sect \mm^{d-1}$. It is clear that $I\sect \mm^{d-1}\subset (I\sect \mm^d):\mm$. Conversely, let $x\in (I\sect \mm^d):\mm$. Then $x\in I:\mm=I$ since $I$ is saturated and $x\in \mm^d:\mm=\mm^{d-1}$. Hence $x\in I\sect \mm^{d-1}$. Together with our induction hypothesis we obtain
\[
\sat(I\sect \mm^d)=\sat((I\sect \mm^d):\mm)+1=\sat(I\sect \mm^{d-1})+1=(d-1)-\alpha(I) +1 =d-\alpha(I),
\]
as desired.
\end{proof}

For  monomial ideals, Corollary~\ref{linearbound} can be improved as follows: a function $f\:\; \ZZ\to  \ZZ$ is called {\em quasi-linear}, if
there exists an integer $d\geq 1$ and for $i=0,\ldots,d-1$,  linear function $f_i(x) =a_ix+b_i$ with $a_i,b_i\in\QQ$ such that $f(k)=f_i(k)$ for $k\equiv i\mod d$.

\begin{Theorem}
\label{quasilinear}
 Let $I\subset S$ be a monomial ideal. Then there exists a quasi-linear function $f$ such that $\sigma((I^k)^{\sat}/I^k)=f(k)$ for $k\gg 0$.
\end{Theorem}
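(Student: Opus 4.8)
The plan is to deduce the statement from the finite generation of the graded ring $A=\bigoplus_{k\ge 0}(I^k)^{\sat}$, which is indeed a ring since $(I^j)^{\sat}(I^k)^{\sat}\subseteq(I^{j+k})^{\sat}$. So the first and, I expect, the hardest step is to prove that $A$ is a finitely generated $S$-algebra; this is the only point where the monomial hypothesis is used in an essential way. For a monomial ideal one has $I^k\colon x_i^{\infty}=(I\colon x_i^{\infty})^{k}$ and $I^k\colon\mm^{\infty}=\bigcap_{i=1}^{n}(I^k\colon x_i^{\infty})$, so that, setting $J_i=I\colon x_i^{\infty}$, one gets $(I^k)^{\sat}=\bigcap_{i=1}^{n}J_i^{\,k}$ and hence $A=\bigcap_{i=1}^{n}\mathcal R(J_i)$ inside $S[t]$. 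One then has to check that this finite intersection of Rees algebras of monomial ideals is Noetherian — equivalently, that the corresponding affine semigroup in $\NN^{n+1}$ is finitely generated — so that $A$ is a finitely generated $S$-algebra, and in particular a finitely generated bigraded $K$-algebra for the grading by $S$-degree together with the power $k$. (For an arbitrary graded ideal $A$ need not be Noetherian, so no purely homological argument can succeed.)

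Next, write $M_k:=(I^k)^{\sat}/I^k=H^0_{\mm}(S/I^k)$, a graded $S$-module of finite length, so that $\sigma(M_k)=\beta(M_k)-\alpha(M_k)+1$. Since $\depth S/I^k$ is eventually constant, one may assume $M_k\ne 0$ for $k\gg 0$, and it suffices to show that $k\mapsto\alpha(M_k)$ and $k\mapsto\beta(M_k)$ are each eventually quasi-linear. Now both $\mathcal R(I)=\bigoplus_kI^k$ and $A=\bigoplus_k(I^k)^{\sat}$ are finitely generated bigraded $K$-algebras, so their bigraded Hilbert series $\mathcal H_{\mathcal R(I)}(s,t)$ and $\mathcal H_A(s,t)$ are rational functions whose denominators are products of factors $1-s^{a}t^{b}$ read off from the bidegrees of a system of algebra generators — the variables $x_1,\dots,x_n$ contributing the factor $(1-s)^{n}$ in each case, and the remaining generators contributing factors with $b\ge 1$. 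Hence
\[
\sum_{k\ge 0}q_k(s)\,t^{k}\ :=\ \mathcal H_A(s,t)-\mathcal H_{\mathcal R(I)}(s,t)\ =\ \sum_{i,k}\dim_K(M_k)_i\,s^{i}t^{k}
\]
is rational, where $q_k(s)=\sum_i\dim_K(M_k)_i\,s^{i}$ is the Hilbert series of $M_k$; thus $q_k$ is a polynomial with $\alpha(M_k)=\operatorname{ord}_sq_k$ and $\beta(M_k)=\deg_sq_k$. Because every $q_k$ is an honest polynomial in $s$ (and $\sum_k\ell(M_k)t^{k}$ has finite coefficients), the factor $(1-s)^{n}$ cancels, so in lowest terms $\sum_kq_k(s)t^{k}=Q(s,t)\big/\prod_{\nu=1}^{p}(1-s^{a_\nu}t^{b_\nu})$ with all $b_\nu\ge 1$.

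The final step is to extract the quasi-linearity from this rational generating function by a manipulation in the spirit of Kodiyalam's proof that $\reg I^{k}$ is eventually linear. Expanding $\prod_\nu(1-s^{a_\nu}t^{b_\nu})^{-1}=\sum_{\mathbf c\in\NN^{p}}s^{\langle\ab,\mathbf c\rangle}t^{\langle\bb,\mathbf c\rangle}$ and writing $Q(s,t)=\sum_{r,w}\gamma_{r,w}s^{r}t^{w}$ gives
\[
q_k(s)=\sum_{r,w}\gamma_{r,w}\sum_{\mathbf c\in\NN^{p},\ \langle\bb,\mathbf c\rangle=k-w}s^{\,r+\langle\ab,\mathbf c\rangle}.
\]
For fixed $w$, the function $m\mapsto\max\{\langle\ab,\mathbf c\rangle\colon\mathbf c\in\NN^{p},\ \langle\bb,\mathbf c\rangle=m\}$ is, for $m$ large in the numerical semigroup generated by $b_1,\dots,b_p$, quasi-linear in $m$: it is an integer linear program on the polytope $\{\mathbf c\ge 0\colon\langle\bb,\mathbf c\rangle=m\}$, with optimum $m\cdot\max_\nu(a_\nu/b_\nu)$ up to a bounded periodic correction, and likewise for the minimum. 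Using the nonnegativity of the coefficients of $q_k$, together with the description of $(I^k)^{\sat}$ and $I^{k}$ as the monomial ideals generated respectively by the $\prod_l g_l^{c_l}$ with $\langle\bb,\mathbf c\rangle=k$ and by the products of $k$ elements of $G(I)$, one shows that $\deg_sq_k$ and $\operatorname{ord}_sq_k$ coincide with the relevant optima for $k\gg 0$; the delicate point here — which this last observation is meant to control — is to rule out that the leading, respectively trailing, $s$-terms contributed by different monomials of $Q$ cancel. This produces quasi-linear functions $f_\alpha,f_\beta$ with $\alpha(M_k)=f_\alpha(k)$ and $\beta(M_k)=f_\beta(k)$ for $k\gg 0$, and therefore $\sigma\bigl((I^{k})^{\sat}/I^{k}\bigr)=f_\beta(k)-f_\alpha(k)+1$ for $k\gg 0$, as required.
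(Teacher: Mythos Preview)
Your overall strategy coincides with the paper's: both split $\sigma(M_k)=\beta(M_k)-\alpha(M_k)+1$, both reduce to the case $\depth S/I^k=0$ for $k\gg 0$ via Brodmann, and both rely on the finite generation of $A=\bigoplus_k(I^k)^{\sat}$. (The paper simply cites \cite[Theorem~3.2]{HHT}; your description of $A$ as $\bigcap_i\mathcal R(I\!:\!x_i^\infty)$ is correct, but you should then invoke Gordan's lemma on the intersected affine semigroup rather than leave finite generation as something ``to check''.) The execution of the two halves, however, diverges.

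For $\beta(M_k)$ the paper does not use $A$ at all: since $\beta(M_k)$ is the top socle degree of $S/I^k$, it equals $\reg_n(S/I^k)=\max\{j:\beta_{n,j+n}(S/I^k)\neq 0\}$, which is eventually a \emph{linear} function of $k$ by \cite[Theorem~3.1]{CHT}. Your route through the rational bigraded Hilbert series must instead identify $\deg_s q_k$ from $Q(s,t)\big/\prod_\nu(1-s^{a_\nu}t^{b_\nu})$, and here the cancellation you flag is a genuine obstacle. Nonnegativity of the coefficients of $q_k$ controls the \emph{bottom} degree (a nonnegative series cannot begin later than its first positive contribution), but says nothing about the \emph{top} degree: $\beta(M_k)$ records the last place where $(I^k)^{\sat}$ and $I^k$ differ, and your ``explicit description'' only gives generating sets for each ideal, not the highest degree in which their graded pieces disagree. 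I do not see how to rule out top-degree cancellation among the $\gamma_{r,w}$ by that device, so this step is a gap; the clean fix is exactly the paper's identification $\beta(M_k)=\reg_n(S/I^k)$.

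For $\alpha(M_k)$ both approaches succeed, but the paper's is shorter: pick $s$ with $A^{(s)}$ standard graded over $S$ (\cite[Theorem~2.1]{HHT}); then for $k=ls+i$ one has $A_k=(A_s)^lA_i$, whence $\alpha(A_k)=l\,\alpha(A_s)+\alpha(A_i)$, which is your quasi-linear conclusion without any integer programming.
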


\begin{proof}
 We want to show that $f(k) =\beta((I^k)^{\sat}/I^k)-\alpha((I^k)^{\sat}/I^k)+1$ is quasi-linear. By Brodmann \cite{B}, $\depth S/I^k$ is constant for all $k\gg 0$. If $\depth S/I^k>0$ for all $k\gg 0$,  then $f(k)=0$ for $k\gg 0$. Therefore, we may assume that there exists $k_0$ such that that $\depth S/I^k =0$ for  $k\geq k_0$. Now let $k\geq k_0$, and let $\delta(I^k)$ be the highest degree of a socle element of  $(I^k)^{\sat}/I^k$.  Then $\delta(I^k)=\beta((I^k)^{\sat}/I^k)$. On the other hand, $\delta(I^k)= \max\{j\: \beta_{n,j}(S/I^k)= 0\}-n=\reg_n(S/I^k)$, where $\reg_n(M)$ denotes the $n$-regularity of a graded module, see \cite{CHT}. It follows therefore  from \cite[Theorem 3.1]{CHT} (see also \cite{Ko}), that $\beta((I^k)^{\sat}/I^k)$ is a linear function for $k\gg k_0$. Thus it remains to be shown that $\alpha((I^k)^{\sat}/I^k)+1$ is a quasi-linear function.

Note that  $\alpha((I^k)^{\sat}/I^k)$ is the least degree of a generator of $(I^k)^{\sat}$. We denote this number by $a_k$,  and have  to show that  the function $g(k)=a_k$ is quasi-linear for $k\gg 0$. In order to prove this we consider the graded $S$-algebra
$
A=\Dirsum_{k\geq 0}(I^k)^{\sat}.
$
Since $I$ is a monomial ideal, this $S$-algebra is finitely generated, see \cite[Theorem 3.2]{HHT}.  Therefore, by  \cite[Theorem 2.1]{HHT} there exists an integer $s$ such that $A^{(s)}=\Dirsum_{l\geq 0}A_{ls}$ is a standard graded $S$-algebra. Now let $k$ be any number $\geq s$, and let $k=ls+i$ with $0\leq i<s$. Then $A_k =(A_s)^lA_i$. Thus $g(k)=a_sl+a_i$, and the desired conclusion follows.
\end{proof}

\section{The saturation number for polymatroidal ideals}

As a first example  of polymatroidal ideals we consider (squarefree) principal Borel ideals.
Let $u=x_1^{a_1}\cdots x_n^{a_n}$ be a monomial in $S=K[x_1,\ldots,x_n]$. We set $\deg_{x_i}(u)=a_i$ for $i=1,\ldots,n$.  For a given integer  $k\geq 1$,  we let  $I^{\leq k}$ be the ideal generated by all $u\in G(I)$ with $\deg_{x_i}(u)\leq k$ for $i=1,\ldots,n$.  The  ideal $I$ is squarefree if and only if  $I=I^{\leq 1}$.

 \begin{Definition}
\label{monloc}{\em Let $I\subset S=K[x_1,\ldots,x_n]$ be a monomial ideal, and let $k\geq 1$ be an integer.  Then $I$ is called {\em $k$-strongly stable}, if
\begin{enumerate}
\item[(i)] $I=I^{\leq k}$;
\item[(ii)] for all  $u\in G(I)$ and all integers  $1\leq i<j\leq n$ with   $\deg_{x_j}(u)>0$ and  $\deg_{x_i}(u)\leq k-1$ it follows that $x_i(u/x_j)\in I$.
\end{enumerate}
}
\end{Definition}

The ideal   $I$ is called {\em squarefree strongly stable}, if it is $1$-strongly stable. $I$ is {\em strongly stable}, if $I$ is $k$-strongly stable for $k$ bigger than the maximal degree of  a monomial in $G(I)$. In other words, if $u\in G(I)$ and $x_j$ divides $u$, then $x_i(u/x_j)\in I$ for all $i\leq j$.

Let $u_1,\ldots,u_m$ be monomials in $S$ with $\deg_{x_i}(u_j)\leq k$ for $i=1,\ldots,n$ and $j=1,\ldots,m$.  There exists a unique smallest $k$-strongly stable ideal containing $u_1,\ldots,u_m$  which we denote by $B^k(u_1,\ldots,u_m)$. The monomials $u_1,\ldots u_m$ are called  {\em Borel generators} of $B^k(u_1,\ldots,u_m)$. If $I=B^k(u)$, then $I$ is called {\em $k$-principal Borel}, and $1$-principal Borel ideals are simple called {\em principal Borel ideals}.

Let $u, v$ be monomials of same degree and assume  that $\deg_{x_i}(u)\leq k$ for $i=1,\ldots,n$. Then we write $v\prec_k u$ if $v\in B^k(u)$. This defines for each $d\geq 1$, a partial order on the set of monomials of degree $d$ whose exponents are bounded by $k$.

\begin{Theorem}
\label{nicetheorem}
Let $u\in S$ be a monomial  with $\deg_{x_n}(u)>0$, and let $I=B(u)$. Then $\sat(I^k)=k$.
\end{Theorem}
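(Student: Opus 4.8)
The plan is to reduce $\sat(I^k)$ to the saturation number of a single principal Borel ideal and then evaluate that via Proposition~\ref{chinesecopy}. First I would record that $B(u)^k=B(u^k)$: a product of strongly stable ideals is strongly stable, $B(u)^k$ is generated in degree $k\deg u$ and contains $u^k$, so minimality of $B(u^k)$ gives $B(u^k)\subseteq B(u)^k$, while every minimal generator of $B(u)^k$ is a product $v_1\cdots v_k$ of elements of $G(B(u))$, each dominating $u$ in the partial-degree-sum order, hence the product dominates $u^k$ and lies in $B(u^k)$. Thus it suffices to show $\sat(B(w))=\deg_{x_n}(w)$ for an arbitrary monomial $w$ with $\deg_{x_n}(w)>0$, and then take $w=u^k$.

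Write $D=\deg w$ and $c_t=\deg_{x_1}(w)+\cdots+\deg_{x_t}(w)$, so $c_n=D$ and $c_{n-1}=D-\deg_{x_n}(w)$. The core of the argument is the identity
\[
B(w)^{\sat}=\Bigl\{\,m \text{ a monomial}:\ \deg_{x_1}(m)+\cdots+\deg_{x_t}(m)\ge c_t \text{ for } 1\le t\le n-1\,\Bigr\}.
\]
Here ``$\subseteq$'' is clear, since $m\in B(w)^{\sat}$ forces $mx_n^N\in B(w)$ for $N\gg0$, which is exactly the displayed system of inequalities. For ``$\supseteq$'' I would argue directly: given $m$ satisfying the inequalities and any index $i$, fill the slots $x_1,\dots,x_{i-1}$ greedily with the exponents of $m$ (stopping once the running total reaches $D$), put all remaining mass on $x_i$, and set $x_{i+1},\dots,x_n$ to $0$; this yields a monomial $v\in B(w)$ with $\deg_{x_j}(v)\le\deg_{x_j}(m)$ for all $j\ne i$, so $mx_i^N\in B(w)$ for $N\gg0$, and since this works for every $i$, $m\in B(w)^{\sat}$. (Conceptually this is $B(w)^{\sat}=B(w):x_n^\infty$, valid because the associated primes of a strongly stable ideal are the initial-segment primes $(x_1,\dots,x_j)$, of which only $\mm$ contains $x_n$.) From the displayed formula, $B(w)^{\sat}$ is strongly stable and saturated, and is \emph{equigenerated in degree $c_{n-1}=D-\deg_{x_n}(w)$}: if a monomial $m$ in the right-hand side has $\deg m>c_{n-1}$, it is not a minimal generator, since one may delete an occurrence of $x_n$ when $\deg_{x_n}(m)>0$, and otherwise delete the largest variable dividing $m$ (the $t=n-1$ inequality then has slack).

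Next I would verify $B(w)=B(w)^{\sat}\cap\mm^{D}$. Since $B(w)^{\sat}$ is equigenerated in degree $\le D$, the ideal $B(w)^{\sat}\cap\mm^{D}$ is generated in degree $D$ and there equals $(B(w)^{\sat})_D$; but a degree-$D$ monomial satisfying the inequalities for $t\le n-1$ automatically satisfies them for $t=n$, hence lies in $B(w)$, so $(B(w)^{\sat})_D=(B(w))_D$ and the two ideals coincide. Now $J:=B(w)^{\sat}$ is saturated with $\alpha(J)=D-\deg_{x_n}(w)\le D$, so Proposition~\ref{chinesecopy} gives
\[
\sat(B(w))=\sat\bigl(J\cap\mm^{D}\bigr)=D-\alpha(J)=\deg_{x_n}(w).
\]
With $w=u^k$ this reads $\sat(I^k)=\sat(B(u^k))=\deg_{x_n}(u^k)=k\,\deg_{x_n}(u)=k$, the final equality being the hypothesis $\deg_{x_n}(u)=1$.

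The main obstacle is the explicit description of $B(w)^{\sat}$ in the second paragraph — in particular the inclusion ``$\supseteq$'', which hinges on the variable-by-variable greedy construction of a suitable divisor $v\in B(w)$, and the derivation of equigeneration from it. Everything else is routine: $B(u)^k=B(u^k)$ is a standard dominance computation, and once $B(w)=B(w)^{\sat}\cap\mm^{D}$ with $\alpha(B(w)^{\sat})=D-\deg_{x_n}(w)$ is in hand, the conclusion is an immediate application of Proposition~\ref{chinesecopy}.
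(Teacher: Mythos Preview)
Your argument is correct and follows a genuinely different route from the paper. The paper leans on the polymatroidal machinery: it cites that principal Borel ideals are polymatroidal, hence of intersection type, writes $B(u)=\prod_{j=1}^d(x_1,\ldots,x_{i_j})$, determines $\Ass(I)$ by induction, invokes \cite[Corollary~4.10]{HRV} to obtain $I^k=\bigcap_{j=1}^{d-1}(x_1,\ldots,x_{i_j})^{kj}\cap\mm^{kd}$, observes that $x_1^{k(d-1)}$ is a least-degree element of the first factor, and then applies Proposition~\ref{chinesecopy}. You work entirely by hand with the combinatorics of Borel ideals: the reduction $B(u)^k=B(u^k)$, the explicit description of $B(w)^{\sat}$ via the partial-sum inequalities (using that $\bigcap_i I\!:\!x_i^\infty=I^{\sat}$, or equivalently $B(w)^{\sat}=B(w)\!:\!x_n^\infty$), the equigeneration of $B(w)^{\sat}$ in degree $D-\deg_{x_n}(w)$, and the identity $B(w)=B(w)^{\sat}\cap\mm^D$. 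Both arguments terminate in the same appeal to Proposition~\ref{chinesecopy}; yours is more self-contained and actually delivers the sharper formula $\sat(B(w))=\deg_{x_n}(w)$ for every monomial $w$, while the paper's route is shorter but imports several external results.

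One remark on your last line: you write ``the final equality being the hypothesis $\deg_{x_n}(u)=1$'', but the stated hypothesis is only $\deg_{x_n}(u)>0$. Your own computation yields $\sat(I^k)=k\cdot\deg_{x_n}(u)$, and this is the correct general answer (for $u=x_n^2$ one has $I=\mm^2$ and $\sat(I^k)=2k\neq k$). The paper's proof carries the same tacit restriction---it needs $i_{d-1}<n$ so that $\bigcap_{j\le d-1}(x_1,\ldots,x_{i_j})^{kj}$ is actually saturated before Proposition~\ref{chinesecopy} can be applied---so the theorem as printed should be read with $\deg_{x_n}(u)=1$, and your argument establishes precisely that, while at the same time giving the correct extension to the general case.
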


\begin{proof}
It is observed in \cite{HH} that principal Borel ideals  are polymatroidal. Therefore these ideals  are of intersection type, as shown in \cite[Proposition 5]{HV}. Let $u=x_{i_1}x_{i_2}\cdots x_{i_d}$ with $1\leq i_1\leq i_2\leq \cdots \leq i_d=n$. Then $I=\prod_{j=1}^d(x_1,\ldots,x_{i_j})$, (see for example \cite[Examples 2.8]{HPV}). \\
We claim that  $\Ass(I)=\{(x_1,\ldots,x_{i_1}),(x_1,\ldots,x_{i_2}),\ldots, (x_1,\ldots,x_{i_d})\}$.
We prove by induction on $d$. If $d=1$, then there is nothing to prove. Let $d\geq 2$ and the claim has been proved for fewer than $d$.
It is clear that $\Ass(I)\setminus\{\mm\}=\cup_{j=2}^d\Ass(I:x_{i_j}^{\infty})$. Since $(I:x_{i_j}^{\infty})=\prod_{t=1}^{j-1}(x_1,\ldots,x_{i_t})$, we can apply the inductive hypothesis to deduce  our claim.
Therefore, by applying \cite[Corollary 4.10]{HRV},  we see that

\[
I^k=\Sect_{j=1}^{d-1}(x_1,\ldots,x_{i_j})^{kj}\sect \mm^{kd}.
\]
A monomial of least degree in
$\Sect_{j=1}^{d-1}(x_1,\ldots,x_{i_j})^{kj}$ is $x_1^{k(d-1)}.$
Finally  we apply Proposition~\ref{chinesecopy} we get that $\sat(I)=kd-k(d-1)=k$, as desired.
\end{proof}

\begin{Lemma}
\label{bound}
Let $I$ be a polymatroidal ideal  and $k\geq 1$ an integer. Then $I^{\leq k}$ is a polymatroidal ideal. In particular, if $u$ is $k$-bounded monomial. Then $B^k(u)$ is a polymatroidal ideal.
\end{Lemma}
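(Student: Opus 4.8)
The plan is to verify directly that $I^{\leq k}$ satisfies the exchange property that defines a polymatroidal ideal. Write $J = I^{\leq k}$. Since $G(J) = \{u \in G(I) : \deg_{x_i}(u) \leq k \text{ for all } i\}$, all elements of $G(J)$ have the same degree $d$ as those of $G(I)$, so the degree-homogeneity requirement is automatic. Now take $u, v \in G(J)$ with $\deg_{x_i}(u) > \deg_{x_i}(v)$. Because $u, v \in G(I)$ and $I$ is polymatroidal, there is an index $j$ with $\deg_{x_j}(v) > \deg_{x_j}(u)$ and $w := x_j(u/x_i) \in I$. The point is that $w$ is again $k$-bounded: for every variable $x_\ell$ with $\ell \neq i, j$ we have $\deg_{x_\ell}(w) = \deg_{x_\ell}(u) \leq k$; at $x_i$ we have $\deg_{x_i}(w) = \deg_{x_i}(u) - 1 < k$; and at $x_j$ we have $\deg_{x_j}(w) = \deg_{x_j}(u) + 1 \leq \deg_{x_j}(v) \leq k$, where the last inequality uses that $v \in G(J)$ is $k$-bounded. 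Hence $w \in J$, which is exactly the exchange condition for $J$. Thus $J = I^{\leq k}$ is polymatroidal.

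For the second statement, recall that if $u$ is a $k$-bounded monomial of degree $d$, then $B^k(u)$ is the smallest $k$-strongly stable ideal containing $u$; equivalently $B^k(u)$ consists of all $k$-bounded monomials $v$ of degree $d$ with $v \prec_k u$. The standard fact (from \cite{HH} or \cite{HH2}) is that the full strongly stable principal ideal $B(u) = B^{\infty}(u)$ generated by a monomial $u$ is polymatroidal. Since $u$ is $k$-bounded, $G(B(u)) \cap \{k\text{-bounded monomials}\}$ is precisely $G(B^k(u))$ — indeed every element of $B^k(u)$ lies in $B(u)$, and conversely a $k$-bounded element of $G(B(u))$ of degree $d$ is reached from $u$ by a sequence of strongly stable moves that can be chosen to stay $k$-bounded, hence lies in $B^k(u)$. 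Therefore $B^k(u) = B(u)^{\leq k}$, and the first part applies to give that $B^k(u)$ is polymatroidal.

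The main obstacle is the bookkeeping in identifying $B^k(u)$ with $B(u)^{\leq k}$: one must check that being $k$-bounded is not just necessary but that it interacts correctly with the Borel order, i.e.\ that $B^k(u) = \{v : v \prec_k u\}$ really equals $\{v : v \prec u,\ v \text{ is } k\text{-bounded}\}$. This amounts to showing that if a $k$-bounded monomial $v$ satisfies $v \prec u$ in the ordinary Borel order, then a chain of moves $u = w_0 \succ w_1 \succ \cdots \succ w_r = v$ can be selected with every $w_t$ $k$-bounded; this follows because one may always perform the moves that decrease $\deg_{x_i}$ for the smallest available $i$ first, and target exponents never exceed those of $v$, which is $k$-bounded. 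Once this identification is in place, the exchange-property argument above does all the work, so I expect the proof to be short.
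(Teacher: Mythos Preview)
Your proof is correct and follows exactly the paper's approach: the exchange-property verification in your first paragraph is essentially verbatim the paper's proof, with the key observation $\deg_{x_j}(u)+1\leq \deg_{x_j}(v)\leq k$. The paper treats the ``in particular'' as immediate and does not spell out the identification $B^k(u)=B(u)^{\leq k}$; your extra paragraph justifying it is a reasonable elaboration of what the paper leaves implicit (and can be shortened, since Lemma~\ref{rahmaticopy} gives the same sorted-index criterion for $\preceq_k$ as for the ordinary Borel order, making the identification transparent).
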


\begin{proof}
We show that for $u,v\in G(I^{\leq k})$ the exchange property holds. Indeed, let $i$ be such that $\deg_{x_i}(u)>\deg_{x_i}(v)$, Since $I$ is polymatroidal, there exists $j$ such that $\deg_{x_j}(v)>\deg_{x_j}(u)$ and $x_j(u/x_i)\in I$. Since $v\in G(I^{\leq k})$, it follows that $\deg_{x_j}(v)\leq k$, Therefore, $\deg_{x_j}(u)\leq k-1$. This implies that $x_j(u/x_i)\in G(I^{\leq k})$.
\end{proof}

\begin{Lemma}
\label{hot}
Let $u\in S$ be a $k$-bounded monomial of degree $d$. Then $B^k(u):\mm  =I+B^k(u)$,  where  $I=0$  or $I$  is a $(k-1)$-Borel ideal generated in degree $d-1$. \end{Lemma}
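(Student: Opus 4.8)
The plan is to describe $B^k(u):\mm$ explicitly via its monomial generators, and then check that the "new" generators (those not already in $B^k(u)$) form a $(k-1)$-Borel ideal generated in degree $d-1$. First I would recall that a monomial $w$ lies in $B^k(u):\mm$ if and only if $x_\ell w\in B^k(u)$ for every $\ell=1,\ldots,n$; since $B^k(u)$ is a monomial ideal it suffices that $x_\ell w\in B^k(u)$ for all $\ell$. Because $B^k(u)$ is generated in degree $d$, the only degrees that matter are $\deg w=d$ (giving back the generators of $B^k(u)$) and $\deg w=d-1$ (the potential new generators; higher-degree elements are already multiples of the degree-$d$ part). So I would set $J$ to be the ideal generated by all monomials $w$ of degree $d-1$ with $x_\ell w\in B^k(u)$ for all $\ell$, and then $B^k(u):\mm=J+B^k(u)$.

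Next I would analyze when such a $w$ exists. Write $w$ of degree $d-1$; the condition $x_\ell w\in B^k(u)$ for all $\ell$ is strongest for $\ell=n$ (or more precisely for the largest index), because in a $k$-strongly stable ideal moving weight toward smaller-index variables keeps you inside the ideal: if $x_n w\in B^k(u)$ and $x_n w$ is $k$-bounded, then $x_\ell w\prec_k x_n w$-type arguments show $x_\ell w\in B^k(u)$ for $\ell<n$ as well (using Definition~\ref{monloc}(ii)), provided no $k$-bound is violated. The only subtlety is the $k$-boundedness requirement $I=I^{\leq k}$: one must check $x_\ell w$ is itself $k$-bounded, which holds automatically when $\deg_{x_\ell}(w)\leq k-1$. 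Thus $w\in J$ iff $x_n w\in B^k(u)$ together with the boundedness condition, and I would want to show this set of $w$'s, if nonempty, is exactly $B^{k-1}(v)$ for a suitable Borel generator $v$ of degree $d-1$ — the natural candidate being $v=u/x_j$ where $x_j$ is the largest variable dividing $u$, i.e.\ $j=\max\{i: \deg_{x_i}(u)>0\}$, so that $v$ is $(k-1)$-bounded in the relevant coordinate (or if $\deg_{x_j}(u)\geq 2$ one still removes one copy of $x_j$). I would verify $v$ is $(k-1)$-bounded: for $i\neq j$ we have $\deg_{x_i}(v)=\deg_{x_i}(u)\leq k$, but actually in a $k$-strongly stable ideal $B^k(u)$ with $u$ a Borel generator, $\deg_{x_i}(u)\leq k-1$ already holds for $i>$ the smallest support index, by stability; one has to be slightly careful about whether $\deg_{x_1}(u)$ could equal $k$, in which case the resulting ideal might only be $(k-1)$-Borel after we note that the degree drops by one. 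In any event the statement only asserts $I$ is $0$ or $(k-1)$-Borel of degree $d-1$, so the boundary cases collapse to $I=0$.

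The main obstacle I expect is the bookkeeping around the $k$-boundedness condition: showing that the set of degree-$(d-1)$ monomials $w$ with $x_\ell w\in B^k(u)$ for all $\ell$ is closed under the $(k-1)$-strongly-stable exchange moves, and is generated (as a $(k-1)$-strongly stable ideal) by a single monomial. For the first part I would argue directly: if $w\in J$ and $1\leq a<b\leq n$ with $\deg_{x_b}(w)>0$, $\deg_{x_a}(w)\leq k-2$, I need $x_a(w/x_b)\in J$, i.e.\ $x_\ell x_a(w/x_b)\in B^k(u)$ for all $\ell$; this follows from $x_\ell w\in B^k(u)$ and one application of Definition~\ref{monloc}(ii) to $B^k(u)$, checking the $k-1$ bound on $x_a$ in $x_\ell w$ (which is $\leq k-1$ since $\deg_{x_a}(w)\leq k-2$, with one possible extra from $\ell=a$ — handled by choosing $\ell\neq a$, legitimate since $n\geq 2$ in the nontrivial case, or reducing to $I=0$). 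For the "principal" part, I would take $v$ to be the $\prec_{k-1}$-largest element of $G(J)$ and show every generator of $J$ is $\prec_{k-1} v$, using that $B^k(u)$ is principal Borel and that membership $x_n w\in B^k(u)=B^k(u)$ forces $x_n w\prec_k u$, hence $w$ is obtained from $u$ by down-shifts after removing an $x_n$ (or the top variable), which is precisely $w\prec_{k-1}(u/x_j)$. Once $J=B^{k-1}(v)$ is established (or $J=0$), Lemma~\ref{bound} is not even needed for the statement, but it confirms $J$ is moreover polymatroidal, consistent with the rest of the section.
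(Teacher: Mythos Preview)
Your proposal has a real gap in the degree argument and also aims to prove more than the lemma actually asserts.

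\medskip
\textbf{The degree claim.} You assert that because $B^k(u)$ is generated in degree $d$, the monomial generators of $B^k(u):\mm$ lie only in degrees $d-1$ and $d$, with the degree-$d$ ones already in $B^k(u)$. That inference is false for general equigenerated monomial ideals: for $I=(x^2,y^2)\subset K[x,y]$ one has $xy\in I:\mm$ with $\deg(xy)=d=2$ but $xy\notin I$. What makes it true here is that $B^k(u)$ has a $d$-linear resolution (by Lemma~\ref{bound} it is polymatroidal, and polymatroidal ideals have linear resolutions, cf.\ \cite{HT}); hence the socle of $S/B^k(u)$ is concentrated in degree $d-1$. This is exactly how the paper argues the point, and you need some equivalent input---your direct combinatorial description does not supply it.

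\medskip
\textbf{Overshooting the statement.} The lemma only claims that the degree-$(d-1)$ part $I$ is a $(k-1)$-strongly stable ideal, not that it is \emph{principal} $(k-1)$-Borel $B^{k-1}(v)$ for some $v$. Your attempt to identify a single Borel generator $v=u/x_j$ and to prove $J=B^{k-1}(v)$ is unnecessary for the lemma and introduces the hedging and side cases you flag (``one has to be slightly careful\ldots'', ``the boundary cases collapse to $I=0$''). The paper proves only what is stated: first $(k-1)$-boundedness of the generators of $I$ (if $\deg_{x_i}(v)\ge k$ for some $i$ then $\deg_{x_i}(x_iv)>k$, contradicting $x_iv\in G(B^k(u))$), and then the $(k-1)$-exchange property. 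Your verification of the exchange property is in fact essentially the paper's argument, and the worry you raise about $\ell=a$ is not an issue: when $\ell=a$ one has $\deg_{x_a}(x_\ell w)=\deg_{x_a}(w)+1\le k-1$, so Definition~\ref{monloc}(ii) applies directly; when $\ell=b$ no exchange is needed since $x_b\cdot x_a(w/x_b)=x_aw\in B^k(u)$.

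\medskip
In short: keep your exchange argument, drop the attempt to prove principality, and replace the unsupported degree claim by the linear-resolution input that the paper uses.
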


\begin{proof}
 We may assume that $\depth S/B^k(u)=0$. By Lemma~\ref{bound},  $B^k(u)$  is a polymatroidal ideal, and hence has a $d$-resolution, see \cite[Lemma 1.3 ]{HT}. It follows that $B^k(u):\mm  =I+B^k(u)$, where $I$ is generated in degree $d-1$.  Let $v\in G(I)$. Then $x_iv\in G(B^k(u))$ for all $i$.   Suppose $\deg_{x_i}(v)\geq k$ for some $i$. Then $\deg_{x_i}(x_iv)>k$, a contradiction. This shows that all generators of $I$ are $(k-1)$-bounded.

 To complete the proof we must show that $I$ is $(k-1)$-Borel. Let $v\in G(I)$ with $x_j|v$,  $i<j$ and   $\deg_{x_i}(x_iv)\leq k-1$.  Then we must show that $v_0=x_i(v/x_j)\in G(I)$, that is, $x_lv_0\in B^k(u)$ for $l=1,\ldots,n$.  Indeed, if $l=j$,  then  $x_{l}v_{0}=x_{i}v\in B^k(u)$. If  $l\neq j$, then $x_lv_0= x_i(x_lv)/x_j\in B^k(u)$ because  $x_lv\in B^k(u)$ and $B^k(u)$ is $k$-Borel.
\end{proof}

The next result  is taken from \cite{HLR}.

\begin{Lemma}
\label{rahmaticopy}
Let $u, v$ be $k$-bounded monomials of degree $d$. Let $u=x_{j_1}\cdots x_{j_d}$  with $j_1\leq j_2\leq \ldots\leq j_d$  and  $v=x_{i_1}\cdots x_{i_d}$ with $i_1\leq i_2\leq \cdots \leq i_d$.  Then $v\preceq_k u$ if and only if $i_r\leq j_r$  for $r=1,\ldots, d$.
\end{Lemma}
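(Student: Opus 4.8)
The plan is to translate the $k$-Borel order into a componentwise order on partial sums of exponent vectors. For a monomial $w$ of degree $d$ and $0\le t\le n$ put $c_t(w)=\sum_{\ell=1}^{t}\deg_{x_\ell}(w)$, so that $c_0(w)=0$, $c_n(w)=d$, and if $w=x_{\ell_1}\cdots x_{\ell_d}$ with $\ell_1\le\cdots\le\ell_d$ then $c_t(w)=|\{r:\ell_r\le t\}|$. A one-line counting argument shows that for $u,v$ of degree $d$ the condition ``$i_r\le j_r$ for $r=1,\dots,d$'' is equivalent to ``$c_t(v)\ge c_t(u)$ for all $t$'': the direction $\Leftarrow$ comes from evaluating at $t=j_r$ (since $c_{j_r}(u)\ge r$, at least $r$ of the $i_\ell$ are $\le j_r$), and $\Rightarrow$ is immediate because $j_r\le t$ forces $i_r\le t$. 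Writing $W$ for the set of all $k$-bounded monomials $w$ of degree $d$ with $c_t(w)\ge c_t(u)$ for every $t$, it therefore suffices to prove $B^k(u)=(W)$; since $B^k(u)$ is, by construction (or by Lemma~\ref{bound}), generated in the single degree $d=\deg u$, and since a degree-$d$ element of $(W)$ lies in $W$, this identity says exactly that for $k$-bounded $v$ of degree $d$ one has $v\in B^k(u)$ iff $v\in W$.

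For the inclusion $B^k(u)\subseteq (W)$ I would verify that $(W)$ is $k$-strongly stable. It contains $u$ trivially. For the exchange property, if $w\in W$, $x_j\mid w$, $i<j$ and $\deg_{x_i}(w)\le k-1$, then $w':=x_i(w/x_j)$ is again a $k$-bounded monomial of degree $d$, and a lowering exchange only raises the partial sums (it adds $1$ to $c_t$ for $i\le t\le j-1$ and leaves the others unchanged), so $w'\in W$. By minimality of $B^k(u)$ among $k$-strongly stable ideals containing $u$ we get $B^k(u)\subseteq(W)$.

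For the reverse inclusion, equivalently $W\subseteq B^k(u)$, I would induct on $N(w)=\sum_{t=1}^{n-1}\bigl(c_t(w)-c_t(u)\bigr)\ge 0$. If $N(w)=0$ then $c_t(w)=c_t(u)$ for all $t$, hence $w=u\in B^k(u)$. If $N(w)>0$, let $i=\min\{t:c_t(w)>c_t(u)\}$ and let $e$ be the largest $t$ for which $c_\tau(w)>c_\tau(u)$ throughout $i\le\tau\le t$; then $e\le n-1$ since $c_n(w)=c_n(u)$, so $j:=e+1\le n$. From $c_{i-1}(w)=c_{i-1}(u)$ one gets $\deg_{x_i}(w)>\deg_{x_i}(u)\ge 0$, so $x_i\mid w$; from $c_{e+1}(w)=c_{e+1}(u)$ together with $c_e(w)>c_e(u)$ one gets $\deg_{x_j}(w)<\deg_{x_j}(u)\le k$, so $\deg_{x_j}(w)\le k-1$. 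Hence $w':=x_j(w/x_i)$ is a $k$-bounded monomial of degree $d$ with $c_t(w')=c_t(w)-1$ for $i\le t\le e$ and $c_t(w')=c_t(w)$ otherwise; in particular $w'\in W$ and $N(w')=N(w)-(e-i+1)<N(w)$, so $w'\in B^k(u)$ by induction. Finally $w=x_i(w'/x_j)$ with $i<j$, $x_j\mid w'$, and $\deg_{x_i}(w')=\deg_{x_i}(w)-1\le k-1$, so $w$ is obtained from the degree-$d$ generator $w'$ of $B^k(u)$ by a legal $k$-strongly stable exchange and thus $w\in B^k(u)$. Taking $w=v$ finishes the proof.

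I expect the delicate point to be exactly this inductive step: one must produce a single exchange that is legal in the $k$-bounded sense (the exponent of the variable whose exponent is raised must remain $\le k$) and that moves the partial-sum vector strictly closer to that of $u$ while staying inside $W$. Placing $j$ at the right-hand end of a maximal block on which $c_\tau(w)$ stays strictly above $c_\tau(u)$ is precisely what makes $\deg_{x_j}(w)\le k-1$ automatic; a naive exchange with $j$ chosen too small would violate the bound — already for $k=1$ it could create a square. Everything else is the monotonicity of partial sums under exchanges together with the fact, from Lemma~\ref{bound}, that $B^k(u)$ is polymatroidal and hence generated in degree $d$.
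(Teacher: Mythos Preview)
Your argument is correct. The translation of the condition $i_r\le j_r$ into the partial-sum inequality $c_t(v)\ge c_t(u)$ is standard, the verification that $(W)$ is $k$-strongly stable is clean, and the inductive step is handled carefully: by choosing $j=e+1$ at the right end of a maximal block where $c_\tau(w)>c_\tau(u)$, you force $c_j(w)=c_j(u)$ and hence $\deg_{x_j}(w)<\deg_{x_j}(u)\le k$, which is exactly what guarantees the raising move $w'=x_j(w/x_i)$ stays $k$-bounded. The final step uses that $B^k(u)$ is generated in the single degree $d$, so that $w'\in B^k(u)$ of degree $d$ lies in $G(B^k(u))$ and the defining exchange property applies; this is legitimate, since $B^k(u)$ is by construction the ideal generated by the (degree-preserving) $k$-exchange closure of $\{u\}$.

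As for comparison: the paper does not actually prove Lemma~\ref{rahmaticopy}. It simply records the statement as ``taken from \cite{HLR}'' and uses it as a black box in the proof of Theorem~\ref{borelsocle}. So your write-up supplies a complete, self-contained proof where the paper gives none. The approach you take---recasting $\preceq_k$ via cumulative exponent sums and then running an exchange induction---is the natural one and is essentially the argument one finds in \cite{HLR}; it also makes transparent why the $k$-boundedness constraint does not obstruct the induction, which is the only point where the squarefree/$k$-bounded case differs from the classical strongly stable case.
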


\begin{Theorem}
\label{borelsocle}
Let as before $S=K[x_1,\ldots,x_n]$ and $k$, $d$   be positive integers.  Let $d=qk+r $ with integers $q,r\geq 0$  and $r<k$. We set  $u_{k,d,n}=x_n^kx_{n-1}^k\cdots x_{n-q+1}^kx_{n-q}^r$. If $r=0$, then  $u_{k,d,n}$ is defined in $S$ if and only if  $\lfloor d/k\rfloor\leq n$ and if $r\neq 0$ then $u_{k,d,n}$ is defined in $S$ if and only if $\lfloor d/k\rfloor< n$, and we have
\[
B^k(u_{k,d,n}):\mm =B^{k-1}(u_{k-1,d-1,n})+B^k(u_{k,d,n}),
\]
if $u_{k-1, d-1,n}$ is defined in $S$. Otherwise, $B^k(u_{k,d,n}):\mm =B^k(u_{k,d,n})$.
\end{Theorem}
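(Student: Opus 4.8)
The plan is to identify $B^k(u_{k,d,n})$ as the "largest" $k$-bounded strongly stable ideal generated in degree $d$ inside $S=K[x_1,\ldots,x_n]$, in the sense that a $k$-bounded monomial $w$ of degree $d$ lies in $B^k(u_{k,d,n})$ if and only if $w \preceq_k u_{k,d,n}$. By Lemma~\ref{rahmaticopy} this last condition, after writing $w=x_{i_1}\cdots x_{i_d}$ with $i_1\le\cdots\le i_d$, is simply the coordinatewise inequality $i_r\le (u_{k,d,n})_r$ for all $r$, where $(u_{k,d,n})_r$ denotes the $r$-th smallest index appearing in $u_{k,d,n}$. First I would make the definition of $u_{k,d,n}$ completely explicit at the level of its sorted index sequence: it is the lexicographically largest weakly increasing sequence $i_1\le\cdots\le i_d$ with all entries $\le n$ in which no value is repeated more than $k$ times; this is exactly the sequence that takes the value $n$ exactly $k$ times, then $n-1$ exactly $k$ times, and so on, ending with $r$ copies of $n-q$ (or, when $r=0$, ending with the last block of $k$ copies of $n-q+1$). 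The stated existence conditions $\lfloor d/k\rfloor\le n$ (resp.\ $<n$) are precisely the conditions that this sequence fits into the index range $\{1,\ldots,n\}$, and I would record this as a preliminary observation.

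Next I would reduce to computing a generator of the ideal $B^k(u_{k,d,n}):\mm$ generated in degree $d-1$, which exists and is $(k-1)$-Borel by Lemma~\ref{hot} (assuming $\depth S/B^k(u_{k,d,n})=0$; if the depth is positive the colon equals the ideal itself, which is the "otherwise" case, and I would check this happens exactly when $u_{k-1,d-1,n}$ fails to be defined in $S$). By Lemma~\ref{hot}, a monomial $v$ of degree $d-1$ lies in $G(B^k(u_{k,d,n}):\mm)$ iff $x_i v\in B^k(u_{k,d,n})$ for \emph{all} $i=1,\ldots,n$; taking $i=n$, which is the worst case, $v$ is in the colon iff $x_n v\in B^k(u_{k,d,n})$, i.e.\ iff $x_n v$ is $k$-bounded and $x_n v\preceq_k u_{k,d,n}$. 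Translating through Lemma~\ref{rahmaticopy}: writing $v=x_{i_1}\cdots x_{i_{d-1}}$ with $i_1\le\cdots\le i_{d-1}$, the sorted index sequence of $x_n v$ is $(i_1,\ldots,i_{d-1},n)$, and the condition becomes $i_r\le (u_{k,d,n})_r$ for $r=1,\ldots,d-1$ together with $i_{d-1}< n$ when appending $x_n$ would create a $(k+1)$-st copy of $n$. Unwinding, this says exactly that $v\preceq_{k-1} u_{k-1,d-1,n}$: the bound sequence for $v$ becomes the first $d-1$ entries of $u_{k,d,n}$'s sorted sequence, capped so that no value repeats more than $k-1$ times, which is precisely the sorted sequence of $u_{k-1,d-1,n}$. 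Since $B^k(u_{k,d,n}):\mm$ is $(k-1)$-Borel (Lemma~\ref{hot}) and every $k-1$-bounded monomial $v$ of degree $d-1$ with $v\preceq_{k-1}u_{k-1,d-1,n}$ lies in it while $u_{k-1,d-1,n}$ itself does (it is $(k-1)$-bounded of degree $d-1$ and satisfies the colon test), the degree-$(d-1)$ part of the colon ideal is generated as a $(k-1)$-Borel ideal by $u_{k-1,d-1,n}$, giving $B^{k-1}(u_{k-1,d-1,n})$.

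Finally I would assemble: $B^k(u_{k,d,n}):\mm = B^{k-1}(u_{k-1,d-1,n})+B^k(u_{k,d,n})$ by Lemma~\ref{hot}, where the second summand is present because the colon always contains the ideal. The main obstacle I anticipate is the bookkeeping in the two edge cases at the tail of the sorted sequence --- the split $d=qk+r$ with $r=0$ versus $r\ne 0$, and correspondingly whether subtracting $1$ from the degree drops into a fresh block or merely shortens the last block --- and in particular verifying cleanly that the "$v\preceq_{k-1}u_{k-1,d-1,n}$" reformulation is \emph{equivalent} to the colon condition rather than merely sufficient, i.e.\ that no generator of the colon ideal is lost. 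I would handle this by working entirely with the sorted index sequences and the coordinatewise criterion of Lemma~\ref{rahmaticopy}, which makes both the "$\supseteq$" and "$\subseteq$" directions a direct comparison of explicit weakly increasing sequences, and by separately dispatching the non-existence case (when $u_{k-1,d-1,n}$ does not fit in $S$, no monomial of degree $d-1$ passes the colon test for $i=n$, so the colon adds nothing and equals $B^k(u_{k,d,n})$, which forces $\depth S/B^k(u_{k,d,n})>0$).
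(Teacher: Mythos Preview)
Your plan is essentially the paper's proof: both rely on Lemma~\ref{hot} to know the new generators of the colon are $(k-1)$-bounded of degree $d-1$, reduce to testing $x_nv\in B^k(u_{k,d,n})$ via $k$-strong stability, and use Lemma~\ref{rahmaticopy} for the coordinatewise comparison. The one substantive simplification you are missing is precisely what you flag as your ``main obstacle'': the $\subseteq$ direction is \emph{trivial}. By its very construction, $u_{k-1,d-1,n}$ is the unique $\preceq_{k-1}$-largest $(k-1)$-bounded monomial of degree $d-1$ in $S$, so $B^{k-1}(u_{k-1,d-1,n})$ already contains \emph{every} $(k-1)$-bounded monomial of degree $d-1$. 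Since Lemma~\ref{hot} tells you the degree-$(d-1)$ part of the colon consists only of such monomials, the inclusion $B^k(u_{k,d,n}):\mm\subseteq B^{k-1}(u_{k-1,d-1,n})+B^k(u_{k,d,n})$ is automatic; no bookkeeping on tail blocks is needed. This is exactly how the paper argues that direction.

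Your ``capping'' description for the equivalence is not literally correct and would not survive as written. For instance, with $k=3$, $d=6$, $n=3$ the first $d-1=5$ sorted indices of $u_{3,6,3}=x_3^3x_2^3$ are $(2,2,2,3,3)$; there is no well-defined ``cap to at most $k-1=2$ repeats'' that yields the sorted sequence $(1,2,2,3,3)$ of $u_{2,5,3}$. What is true is that the \emph{maximal} $(k-1)$-bounded sequence dominated coordinatewise by $\big((u_{k,d,n})_1,\ldots,(u_{k,d,n})_{d-1}\big)$ is the sorted sequence of $u_{k-1,d-1,n}$ --- but proving this amounts to (i) knowing $u_{k-1,d-1,n}$ is $\preceq_{k-1}$-maximal and (ii) verifying $x_nu_{k-1,d-1,n}\preceq_k u_{k,d,n}$, which is the paper's Figure computation. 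So your equivalence decomposes into the same two pieces the paper uses; you have just packaged them together. Once you replace the capping heuristic by the maximality observation, the only real work left is the $\supseteq$ direction, i.e.\ checking $x_nu_{k-1,d-1,n}\preceq_k u_{k,d,n}$ coordinatewise, exactly as you intended.
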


\begin{proof}
We may assume that $k< d$. Indeed,if $k\geq  d$, then $u_{k,d,n}=x_n^d$ and $u_{k-1,d-1,n}=x_n^{d-1}$. In this case the assertion is obvious.

We first show that  $u_{k-1,d-1,n}\in B^k(u_{k,d,n}):\mm$. For this it suffices to show that $x_nu_{k-1,d-1,n}\in B^k(u_{k,d,n})$, because $B^k(u_{k,d,n})$ is $k$-Borel. Indeed, let $u_{k,d,n}=x_{i_1}x_{i_2} \cdots x_{i_d}$ with $i_1\leq i_2\leq \ldots \leq i_d=n$ and $x_nu_{k-1,d-1,n}=x_{j_1}x_{j_2} \cdots x_{j_d}$ with $j_1\leq j_2\leq \ldots \leq j_d=n$. By Lemma~\ref{rahmaticopy} we must show that $j_l\leq i_l$ for $l=1,\ldots, d$. Figure~\ref{comparison} illustrates this comparison. The integers $i_l$ and $j_l$ are labeled from right to left. Then for any $l$ with $1\leq l\leq d$ the boxes with the same $x$ coordinate give us the value of $i_l$ and $j_l$. For example, for $l_1$ in Figure~\ref{comparison} we obtain $i_{l_1}=n-q+1$ and $j_{l_1}=n-q'+1$, and for $l_2$  we obtain $i_{l_2}=n-2$ and $j_{l_2}=n-3$. From the equations $d=qk+r$ and $d-1=q'(k-1)+r'$ with $0\leq r<k$ and $0\leq r'<k-1$, it follows that $q\leq q'$ because $d/k\leq (d-1)/(k-1)$. Therefore Figure~\ref{comparison} shows that the desired inequalities  hold.
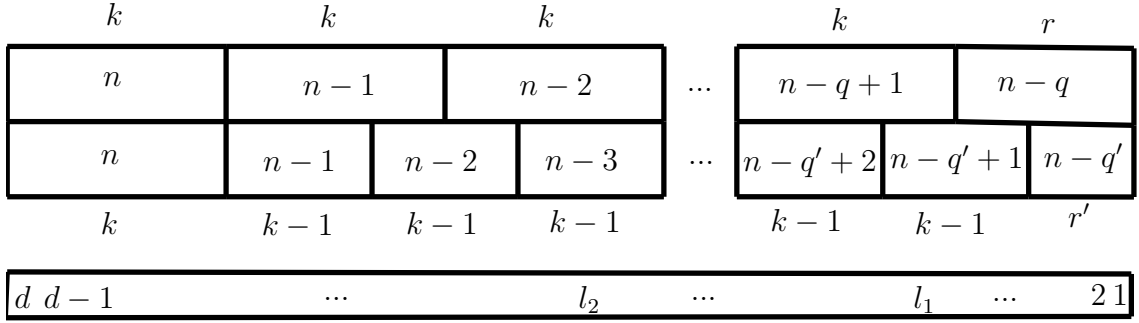
\begin{figure}[hbt]
\begin{center}
\newrgbcolor{ududff}{0.30196078431372547 0.30196078431372547 1.}
\newrgbcolor{xdxdff}{0.49019607843137253 0.49019607843137253 1.}
\psset{xunit=0.969999cm,yunit=1.0cm,algebraic=true,dimen=middle,dotstyle=o,dotsize=5pt 0,linewidth=1.6pt,arrowsize=3pt 2,arrowinset=0.25}
\begin{pspicture*}(-9.007960230757126,2.)(7.007401156914469,6.998648198258853)
\psline[linewidth=2.pt](-6.,6.)(-6.,5.)
\psline[linewidth=2.pt](-6.,5.)(-3.,5.)
\psline[linewidth=2.pt](-3.,5.)(-3.,6.)
\psline[linewidth=2.pt](-3.,6.)(-6.,6.)
\psline[linewidth=2.pt](-3.,6.)(0.,6.)
\psline[linewidth=2.pt](0.,6.)(0.,5.)
\psline[linewidth=2.pt](0.,5.)(-3.,5.)
\psline[linewidth=2.pt](-6.,6.)(-9.,6.)
\psline[linewidth=2.pt](-9.,6.)(-9.,5.)
\psline[linewidth=2.pt](-9.,5.)(-6.,5.)
\psline[linewidth=2.pt](1.,6.)(4.,6.)
\psline[linewidth=2.pt](4.,6.)(4.,5.)
\psline[linewidth=2.pt](4.,5.)(1.,5.)
\psline[linewidth=2.pt](1.,5.)(1.,6.)
\psline[linewidth=2.pt](-9.,5.)(-9.,4.)
\psline[linewidth=2.pt](-9.,4.)(-6.,4.)
\psline[linewidth=2.pt](-6.,4.)(-6.,5.)
\psline[linewidth=2.pt](-6.,4.)(-4.,4.)
\psline[linewidth=2.pt](-4.,4.)(-4.,5.)
\psline[linewidth=2.pt](-4.,4.)(-2.,4.)
\psline[linewidth=2.pt](-2.,4.)(-2.,5.)
\psline[linewidth=2.pt](-2.,4.)(0.,4.)
\psline[linewidth=2.pt](0.,4.)(0.,5.)
\psline[linewidth=2.pt](1.,5.)(1.,4.)
\psline[linewidth=2.pt](1.,4.)(3.,4.)
\psline[linewidth=2.pt](3.,4.)(3.,5.)
\psline[linewidth=2.pt](3.,4.)(5.,4.)
\psline[linewidth=2.pt](4.,5.)(6.442360843347224,4.950377061577588)
\psline[linewidth=2.pt](6.442360843347224,4.950377061577588)(6.442360843347224,3.996871532432861)
\psline[linewidth=2.pt](5.,4.)(4.999587365006443,4.979690772394813)
\psline[linewidth=2.pt](5.,4.)(6.442360843347224,3.996871532432861)
\psline[linewidth=2.pt](4.,6.)(6.442360843347224,5.9745126299182205)
\psline[linewidth=2.pt](6.442360843347224,5.9745126299182205)(6.442360843347224,4.950377061577588)
\rput[tl](-7.683646995833897,5.656677453536645){$n$}
\rput[tl](-4.946732976992554,5.621362433938692){$n-1$}
\rput[tl](-1.9802713307645194,5.621362433938692){$n-2$}
\rput[tl](-5.494115780760823,4.614884375397035){$n-1$}
\rput[tl](1.568888138829737,5.656677453536645){$n-q+1$}
\rput[tl](-7.71896201543185,4.614884375397035){$n$}
\rput[tl](-1.6271211347849914,4.632541885196012){$n-3$}
\rput[tl](-3.551789702873419,4.614884375397035){$n-2$}
\rput[tl](1.087450393855327,4.6501993949949885){$n-q'+2$}
\rput[tl](3.099979040335566,4.685514414592942){$n-q'+1$}
\rput[tl](4.570664804655725,5.656677453536645){$n-q$}
\rput[tl](5.206335157418875,4.703171924391918){$n-q'$}
\rput[tl](-7.648331976235944,6.574867963083419){$k$}
\rput[tl](-4.717185349605861,6.557210453284442){$k$}
\rput[tl](-1.7330661935788498,6.557210453284442){$k$}
\rput[tl](2.2928460405877695,6.504237923887513){$k$}
\rput[tl](-7.71896201543185,3.80263892464412){$k$}
\rput[tl](-5.511773290559799,3.7849814148451437){$k-1$}
\rput[tl](-3.5341321930744427,3.80263892464412){$k-1$}
\rput[tl](1.4629430800358787,3.837953944242073){$k-1$}
\rput[tl](3.440584177521236,3.80263892464412){$k-1$}
\rput[tl](5.171020137820923,6.362977845495702){$r$}
\rput[tl](5.541827843599427,3.9085839834379787){$r'$}
\rput[tl](0.3152049431024127,5.444787335948928){$...$}
\rput[tl](0.3328624529013891,4.473624297005224){$...$}
\rput[tl](-1.5741486053880622,3.8202964344430965){$k-1$}
\rput[tl](6.15,2.8314758857004168){$1$}
\rput[tl](5.85,2.8314758857004168){$2$}
\rput[tl](3.422926667722259,2.81381837590144){$l_1$}
\rput[tl](-1.168025880011605,2.81381837590144){$l_2$}
\rput[tl](-8.9,2.81381837590144){$d$}
\rput[tl](-8.5,2.81381837590144){$d-1$}
\rput[tl](4.5,2.6549007877106523){$...$}
\rput[tl](0.3681774724993419,2.672558297509629){$...$}
\rput[tl](-4.664212820208932,2.672558297509629){$...$}
\psline[linewidth=2.pt](-9.,3.)(-9.025617740556104,2.390038140726006)
\psline[linewidth=2.pt](6.407045823749271,2.972735964092228)(-9.,3.)
\psline[linewidth=2.pt](6.407045823749271,2.972735964092228)(6.407045823749271,2.4076956505249827)
\psline[linewidth=2.pt](-9.025617740556104,2.390038140726006)(6.407045823749271,2.4076956505249827)
\end{pspicture*}
\end{center}
\caption{Comparison of $u_{k,d,n}$ with $x_nu_{k-1,d-1,n}$ }
\label{comparison}
\end{figure}

Now since $u_{k-1,d-1,n}\in B^k(u_{k,d,n}):\mm$,  and since by Lemma \ref{hot} the monomials of degree $d-1$ in $B^k(u_{k,d,n}):\mm$ generate $(k-1)$-Borel ideal, we see that $B^{k-1}(u_{k-1,d-1,n})\subset B^k(u_{k,d,n}):\mm$.

Conversely, note that with respect to $\preceq_{k-1}$,  the monomial $u_{k-1,d-1,n}$ is the unique largest $(k-1)$-bounded  monomial of degree $d-1$. Therefore,  $B^k(u_{k,d,n}):\mm\subseteq B^{k-1}(u_{k-1,d-1,n})+B^k(u_{k,d,n})$, because Lemma~\ref{hot} implies that the monomials $B^k(u_{k,d,n}):\mm$ which do not belong to $B^k(u_{k,d,n})$ are $(k-1)$-bounded of degree $d-1$.
\end{proof}

As an immediate consequence of Theorem~\ref{borelsocle} we obtain

\begin{Corollary}
\label{newmafi}
Given integers $k,d,n,$ with  $1\leq d\leq n$ and $k\geq 1$.
 Then
\[
\sat(I_{d,n}^k) =\max\{l\:\; (kd-l)/(k-l) \leq n, \ l\leq k\}.
\]
\end{Corollary}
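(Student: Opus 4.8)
The plan is to read off $\sat(I_{d,n}^k)$ directly from the just-proved Theorem~\ref{borelsocle}, the point being that $I_{d,n}^k$ is itself one of the Borel ideals treated there. Indeed $I_{d,n}=B^1(u_{1,d,n})$ with $u_{1,d,n}=x_{n-d+1}\cdots x_n$, and it is well known (see \cite{HRV}) that the $k$-th power $I_{d,n}^k$ is the Veronese type ideal consisting of all $k$-bounded monomials of degree $kd$ in $x_1,\dots,x_n$: a product of $k$ squarefree monomials of degree $d$ is $k$-bounded of degree $kd$, and conversely every such monomial so factors (a defect version of Hall's theorem). Since $u_{k,kd,n}=x_n^k\cdots x_{n-d+1}^k$ is, by Lemma~\ref{rahmaticopy}, the $\preceq_k$-largest $k$-bounded monomial of degree $kd$, this says $I_{d,n}^k=B^k(u_{k,kd,n})$, so it suffices to compute $\sat\bigl(B^k(u_{k,kd,n})\bigr)$.

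Set $v_l=u_{k-l,\,kd-l,\,n}$ for $l\ge 0$, so that $v_0=u_{k,kd,n}$ and each $v_l$ has generator degree $kd-l$. Reading off the existence criterion from Theorem~\ref{borelsocle} ($u_{k',d',n}$ is defined in $S$ iff $\lfloor d'/k'\rfloor\le n$ when $k'\mid d'$, and iff $\lfloor d'/k'\rfloor<n$ otherwise), one checks in both cases that $v_l$ is defined in $S$ if and only if $(kd-l)/(k-l)\le n$; moreover $\frac{d}{dl}\frac{kd-l}{k-l}=\frac{k(d-1)}{(k-l)^2}\ge 0$, so the set of $l$ for which $v_l$ is defined is an initial segment $\{0,1,\dots,L\}$. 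Now apply Theorem~\ref{borelsocle} repeatedly: I claim
\[
B^k(u_{k,kd,n}):\mm^l=\sum_{i=0}^{l}B^{k-i}(v_i)\qquad(0\le l\le L),
\]
and that for $l=L$ this ideal is already saturated (since $v_{L+1}$ is undefined, $B^{k-L}(v_L):\mm=B^{k-L}(v_L)$ by the ``otherwise'' clause of Theorem~\ref{borelsocle}). Granting this, the chain $I_{d,n}^k\subsetneq I_{d,n}^k:\mm\subsetneq\cdots$ strictly ascends for exactly $L$ steps and then stabilizes, so $\sat(I_{d,n}^k)=L$. Since $v_l$ requires $k-l\ge 1$ to make sense and the degenerate value $l=k$ occurs only when $d=1$ (where $I_{d,n}=\mm$ and $\sat(\mm^k)=k$, exactly what the formula returns), this reads $\sat(I_{d,n}^k)=\max\{l:\ (kd-l)/(k-l)\le n,\ l\le k\}$, which is the assertion. (Equivalently $\sat(I_{d,n}^k)=k-\lceil k(d-1)/(n-1)\rceil=\lfloor k(n-d)/(n-1)\rfloor$, and one checks by hand that this equals the stated maximum.)

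The step that requires care — and which I expect to be the main obstacle — is the displayed identity, since $(-):\mm$ does not distribute over sums of ideals. I would prove it by induction on $l$, exploiting that $J_l:=\sum_{i=0}^{l}B^{k-i}(v_i)$ is a sum of Borel ideals with strictly decreasing generator degrees $kd>kd-1>\cdots>kd-l$: in its minimal generator degree $kd-l$ the ideal $J_l$ coincides with its leading summand $B^{k-l}(v_l)$, so a degree-by-degree computation of $J_l:\mm$ shows the only degree in which new elements can appear is $kd-l-1$, and there $J_l:\mm$ agrees with $B^{k-l}(v_l):\mm$, which by Lemma~\ref{hot} and Theorem~\ref{borelsocle} contributes exactly $B^{k-l-1}(v_{l+1})$; that no new element arises in degrees $\ge kd-l$ uses that $I_{d,n}^k$ is polymatroidal, hence has a linear resolution, hence $\reg(I_{d,n}^k)=kd$, so that (as in Corollary~\ref{reg}) the module $(I_{d,n}^k)^{\sat}/I_{d,n}^k$ and all its submodules $J_l/I_{d,n}^k$ live in degrees $<kd$. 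This same input gives a cleaner route that avoids the sum bookkeeping: it shows $(I_{d,n}^k)^{\sat}\cap\mm^{kd}=I_{d,n}^k$, i.e.\ $I_{d,n}^k$ is special, so Proposition~\ref{chinesecopy} gives $\sat(I_{d,n}^k)=kd-\alpha\bigl((I_{d,n}^k)^{\sat}\bigr)$; and a short computation — a monomial $w$ with non-increasing exponent sequence $b_1\ge\cdots\ge b_n$ lies in $(I_{d,n}^k)^{\sat}$ precisely when $k+\sum_{i=2}^n\min(b_i,k)\ge kd$ (the worst large-degree multiplier of $w$ dumps its whole degree onto the variable already carrying $b_1$) — yields $\alpha\bigl((I_{d,n}^k)^{\sat}\bigr)=k(d-1)+\lceil k(d-1)/(n-1)\rceil$ and hence the same value of $\sat(I_{d,n}^k)$. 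Either way, the arithmetic identification of $L$ with the closed form is routine; the genuine content is the description of $(I_{d,n}^k)^{\sat}$.
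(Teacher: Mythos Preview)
Your proposal is correct and follows exactly the route the paper intends: the corollary is stated as an ``immediate consequence of Theorem~\ref{borelsocle}'' with no further proof, and your argument is precisely the iteration of that theorem applied to $I_{d,n}^k=B^k(u_{k,kd,n})$. You are in fact more careful than the paper, since you identify and resolve the only subtle point---that $(-):\mm$ does not distribute over sums---by the degree argument (the crucial input being $\mm\cdot B^{k-l-1}(v_{l+1})\subseteq B^{k-l}(v_l)$, which holds because any variable times a $(k-l-1)$-bounded monomial is $(k-l)$-bounded and $v_l$ is the $\preceq_{k-l}$-maximal such monomial); your alternative via Proposition~\ref{chinesecopy} is also valid and slightly slicker.
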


\begin{Example}
{\em Let $I=(x_1x_2,x_1x_3,x_2x_3)$. Then $\sat(I)=0$ and for $k\geq 2$ we have

\[
\sat(I^k) = \left\{
\begin{array}{ll}
k/2,  &   \text{if $k$ is even,}\\
(k-1)/2, & \text{if $k$ is odd}.
\end{array}
\right.
\]
In fact, $\sat(I)=0$ because $I$ is squarefree. Now let $k\geq 2$. If $k=2a$,
then $(kd-l)/ ( k-l)=(4a-l)/(2a -l)\leq 3$ if and only if  $l\leq a$. Therefore, $a=\max\{l\:\; (kd-l)/(k-l) \leq n, \ l\leq k\}$ and $\sat(I^k)=k/2$. Finally, if $k=2a+1$,  then
$(kd-l)/ ( k-l)=(4a+2-l)/(2a+1-l)\leq 3$ if and only if $l\leq a$. Therefore, $a=\max\{l\:\; (kd-l)/(k-l) \leq n, \ l\leq k\}$ and $\sat(I^k)=(k-1)/2$.
}
\end{Example}

Now let $I$ be any polymatroidal ideal. In \cite[Proposition 5]{HV} it is shown that $I$ is of intersection type, which means that $I$ is the intersection of powers of monomial prime ideals. In other words, there exists monomial prime ideals $P_1,\ldots, P_r$ and positive  integers $a_i$, and $d\geq 0$ such that $I=P_1^{a_1}\sect\ldots\sect P_r^{a_r}\sect \mm^d$. Notice that $I^{\sat}=P_1^{a_1}\sect\ldots\sect P_r^{a_r}$, so that $I=I^{\sat}\sect \mm^d$.

Corollary~\ref{chinesecopy} implies that if  $a$ the least degree of a generator of $P_1^{a_1}\sect\ldots\sect P_r^{a_r}$,  and  $d\geq a$, then $\sat(I)=d-a$.

\begin{Example}
\label{sleepyamir}
{\em Let $I=(x_1,x_2)\sect (x_2,x_3)\sect (x_2,x_4)\sect (x_3,x_4)\sect (x_1,x_2,x_3, x_4)^5$. An element of least degree in $(x_1,x_2)\sect (x_2,x_3)\sect (x_2,x_4)\sect (x_3,x_4)$ is $x_2x_3$. Therefore, $\sat(I)=5-2=3$.}
\end{Example}

By applying monomial localization one obtains

\begin{Corollary}
\label{vacationbegins}
Let $I$ be a monomial ideal with the property that $\Ass(I)=\Ass^\infty(I)$  and that all powers of $I$ are of intersection type.  Then $\sat(I^k)=\sat(I)k$ for all $k$.
\end{Corollary}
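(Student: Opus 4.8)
\subsection*{Proof proposal}

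The plan is to combine Proposition~\ref{chinesecopy} with monomial localization and an induction on the number of variables. Write $I=P_1^{a_1}\sect\cdots\sect P_r^{a_r}$ for the unique irredundant monomial primary decomposition of $I$; by hypothesis each component is genuinely a power of the prime $P_j$. If $\mm\notin\{P_1,\dots,P_r\}$, then $\sat(I)=0$, and since $\Ass(I^k)\subseteq\Ass^\infty(I)=\Ass(I)$ we also have $\mm\notin\Ass(I^k)$, so $\sat(I^k)=0=k\sat(I)$. So assume $P_r=\mm$, and put $J=I^{\sat}=P_1^{a_1}\sect\cdots\sect P_{r-1}^{a_{r-1}}$ and $a=\alpha(J)$. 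Irredundancy of the decomposition forces $a<a_r$ (otherwise $J\subseteq\mm^{a}\subseteq\mm^{a_r}$, so $I=J$ and $\mm\notin\Ass(I)$), one checks that $\alpha(I)=a_r$, and Proposition~\ref{chinesecopy} gives $\sat(I)=a_r-a$.

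The core of the argument is the claim that $I^k=P_1^{ka_1}\sect\cdots\sect P_{r-1}^{ka_{r-1}}\sect\mm^{ka_r}$, equivalently that $(I^k)^{\sat}=\Sect_{j<r}P_j^{ka_j}$ together with $I^k=(I^k)^{\sat}\sect\mm^{ka_r}$. Granting this, Proposition~\ref{chinesecopy} applies (since $\alpha((I^k)^{\sat})\le\alpha((I^{\sat})^{k})=ka<ka_r$) and yields $\sat(I^k)=ka_r-\alpha((I^k)^{\sat})$; thus it remains only to prove $\alpha((I^k)^{\sat})=ka$, the inequality ``$\le$'' being the one just used, so that $\sat(I^k)=ka_r-ka=k\sat(I)$. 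To obtain the claim I would localize at each monomial prime $P\subsetneq\mm$. Monomial localization commutes with products and intersections, so $(I^k)(P)=(I(P))^k$; since $\mm$ becomes the unit ideal after localizing at $P$, we get $I(P)=J(P)=\Sect_{P_j\subseteq P}P_j^{a_j}S(P)$ in the polynomial ring $S(P)$ on the variables occurring in $P$, and — crucially — $I(P)$ again satisfies the hypotheses of the corollary there, because both ``of intersection type'' and ``$\Ass=\Ass^\infty$'' pass to monomial localizations. By induction on the number of variables the full claim holds for $I(P)$; reading it off (the $\mm_{S(P)}$\nobreakdash-primary component of $(I(P))^k$ is $P^{ka_P}S(P)$, and $P\in\Ass(I^k)$ exactly because $\sat(I(P)^k)=k\sat(I(P))\ge k\ge1$) identifies the $P$-primary component of $I^k$ with $P^{ka_P}$ for every $P\in\Ass(I)\setminus\{\mm\}$, and with $\Ass(I^k)\subseteq\Ass(I)$ this gives $(I^k)^{\sat}=\Sect_{j<r}P_j^{ka_j}$. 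Finally $I^k\subseteq(I^k)^{\sat}\sect\mm^{k\alpha(I)}=(I^k)^{\sat}\sect\mm^{ka_r}$, and the reverse inclusion — that is, $\beta((I^k)^{\sat}/I^k)<ka_r$ — should follow from a subadditivity estimate for $\reg_n(S/I^k)=\beta((I^k)^{\sat}/I^k)$ in $k$, anchored at $\beta(I^{\sat}/I)<a_r$.

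The main obstacle is everything that happens at the maximal ideal, where localization gives no leverage: one must establish separately that $\alpha((I^k)^{\sat})\ge ka$ and that $\beta((I^k)^{\sat}/I^k)<ka_r$. This is precisely where the hypothesis $\Ass(I)=\Ass^\infty(I)$ is indispensable. Without it $\mm$ may be an associated prime of $I^k$ even though $\mm\notin\Ass(I)$ — which already happens for $I=I_{d,n}$, where $\sat(I^k)$ is only quasi\nobreakdash-linear in $k$ by Corollary~\ref{newmafi} — and then $\Sect_{j<r}P_j^{ka_j}$ acquires generators below degree $ka$; for instance $x_1x_2x_3\in(I_{2,3}^{\,2})^{\sat}$ has degree $3<4=2\alpha(I_{2,3})$, and the linear scaling breaks. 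Thus the crux is to exploit $\Ass(I^k)=\Ass(I)$, together with the same monomial\nobreakdash-localization bookkeeping that controlled the non\nobreakdash-maximal primes, to pin down the behaviour of $I^k$ at $\mm$ and hence both $\alpha((I^k)^{\sat})=k\alpha(I^{\sat})$ and the exponent $ka_r$ of its $\mm$\nobreakdash-primary component.
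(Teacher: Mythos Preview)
Your plan matches the paper's one-line hint (``by applying monomial localization'') and is correct as far as it goes. One of the two difficulties you flag at the end is in fact easy: since $I^k$ is of intersection type with $\alpha(I^k)=k\alpha(I)$, the $\mm$-primary component of $I^k$ is exactly $\mm^{ka_r}$, so no subadditivity estimate for $\reg_n$ is needed. Combined with your localization argument (which can even be done without induction: for $P\neq\mm$ in $\Ass(I^k)$ the $P$-primary exponent is $\alpha((I^k)(P))=\alpha(I(P)^k)=k\,\alpha(I(P))=ka_P$), this yields $I^k=\bigcap_jP_j^{ka_j}$ cleanly. Via Proposition~\ref{chinesecopy} the whole corollary then reduces to the single equality $\alpha\bigl(\bigcap_{j<r}P_j^{ka_j}\bigr)=k\,\alpha\bigl(\bigcap_{j<r}P_j^{a_j}\bigr)$, which is exactly the inequality ``$\geq$'' you could not close.

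That gap is real and cannot be closed under the stated hypotheses. Take $I=(x_1,x_2)(x_1,x_3)(x_2,x_3)\subset K[x_1,x_2,x_3]$, a transversal polymatroid; all its powers are polymatroidal and hence of intersection type, and $\Ass(I)=\{(x_1,x_2),(x_1,x_3),(x_2,x_3),\mm\}$ already exhausts all monomial primes, so $\Ass(I)=\Ass^\infty(I)$ trivially. Here $I^{\sat}=(x_1x_2,x_1x_3,x_2x_3)$ with $\alpha(I^{\sat})=2$ and $\alpha(I)=3$, giving $\sat(I)=1$. But $(I^2)^{\sat}=(x_1,x_2)^2\cap(x_1,x_3)^2\cap(x_2,x_3)^2$ contains $x_1x_2x_3$, so $\alpha((I^2)^{\sat})=3$ and $\sat(I^2)=6-3=3\neq 2\sat(I)$. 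This is precisely your parenthetical observation about $x_1x_2x_3\in(I_{2,3}^{2})^{\sat}$, now with $\mm$ already in $\Ass(I)$; the hypothesis $\Ass(I)=\Ass^\infty(I)$ does not rule it out. So the obstacle you isolated is not a defect of your argument but of the corollary itself: the paper's omitted proof cannot be completed as stated.
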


Since power of polymatroidal ideals are again polymatroidal we have

\begin{Corollary}
\label{poly}
Let $I$ be a polymatroidal ideal with $\Ass(I)=\Ass^\infty(I)$.  Then  $\sat(I^k)=\sat(I)k$ for all $k$.
\end{Corollary}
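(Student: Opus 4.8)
The strategy is to deduce this directly from Corollary~\ref{vacationbegins}, so the only thing to verify is that a polymatroidal ideal $I$ with $\Ass(I)=\Ass^\infty(I)$ satisfies the two hypotheses of Corollary~\ref{vacationbegins}: that $\Ass(I^k)=\Ass^\infty(I^k)$ for every $k$, and that every power $I^k$ is of intersection type. The second point is the easy one: by a theorem of Herzog--Hibi (the product, and hence every power, of a polymatroidal ideal is polymatroidal), each $I^k$ is polymatroidal, and by \cite[Proposition 5]{HV} every polymatroidal ideal is of intersection type. So $I^k$ is of intersection type for all $k$, and more generally $(I^k)^m=I^{km}$ is polymatroidal, hence of intersection type, for all $m$.

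For the first point, I would argue as follows. Fix $k$. We must show $\Ass((I^k)^m)=\bigcup_{m\geq 1}\Ass((I^k)^m)$, i.e. $\Ass(I^{km})$ is the same for all $m\geq 1$. Since $\Ass(I)=\Ass^\infty(I)$, the sets $\Ass(I^j)$ stabilize immediately: $\Ass(I^j)=\Ass(I)$ for every $j\geq 1$ (this is the standard fact that $\Ass^\infty(I)=\bigcup_{j}\Ass(I^j)$ is an increasing-and-eventually-stable union for ideals whose associated primes form a chain under the usual power inclusions for polymatroidal ideals — concretely, for polymatroidal ideals one knows $\Ass(I)\subseteq\Ass(I^2)\subseteq\cdots$, so $\Ass(I)=\Ass^\infty(I)$ forces $\Ass(I^j)=\Ass(I)$ for all $j$). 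In particular $\Ass(I^{km})=\Ass(I)$ for all $m\geq 1$, so $\Ass(I^k)=\Ass((I^k)^m)$ for all $m$, which is exactly $\Ass(I^k)=\Ass^\infty(I^k)$.

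With both hypotheses of Corollary~\ref{vacationbegins} verified for $I$, that corollary yields $\sat(I^k)=\sat(I)k$ for all $k$, which is the claim. The only genuinely delicate point is the bookkeeping in the previous paragraph: one needs the monotonicity $\Ass(I^j)\subseteq\Ass(I^{j+1})$ for polymatroidal ideals in order to conclude stabilization from $\Ass(I)=\Ass^\infty(I)$, so I would either cite this known property of polymatroidal ideals or observe that it follows from their being of intersection type together with the behavior of intersection type under taking powers. Everything else is a direct invocation of results already established in the excerpt.
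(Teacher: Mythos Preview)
Your approach is the same as the paper's --- apply Corollary~\ref{vacationbegins} after observing that powers of polymatroidal ideals are again polymatroidal (so all $I^k$ are of intersection type by \cite[Proposition~5]{HV}). That is literally the paper's one-line proof.

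However, you have misread the hypotheses of Corollary~\ref{vacationbegins}. That corollary asks only that $\Ass(I)=\Ass^\infty(I)$ for the single ideal $I$, together with all powers of $I$ being of intersection type. The first condition is \emph{already} your hypothesis, so there is nothing to check there; the entire paragraph about verifying $\Ass(I^k)=\Ass^\infty(I^k)$ for each $k$, and the appeal to the persistence property $\Ass(I^j)\subseteq\Ass(I^{j+1})$ for polymatroidal ideals, is unnecessary. (Your argument there is not wrong --- persistence does hold for polymatroidal ideals, see \cite{HRV} --- it is just superfluous.) Once you drop that paragraph, your proof collapses to exactly the paper's.
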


\begin{Example}{\em
Let $I$ be a transversal polymatroid.  In other words, $I$ is a product of monomial prime ideals. Then $\sat(I^k)=\sat(I)k$. This follows from Corollary~\ref{poly}  because $\Ass(I)=\Ass^\infty(I)$, (see\cite[Corollary 4.6]{HRV}).}
\end{Example}

In order to compute $\sat(I)$ of a polymatroidal ideal we have to determine its presentation as an intersection of   powers of monomial prime ideals, as described in \cite{HV}: let $\P$ be a discrete polymatroid on the ground set $[n]$  of rank $d$ with rank function $\rho$, see \cite{HH}.  The {\em complementary rank function}  $\tau\: 2^{[n]}\to \ZZ_+$ is given by  $\tau(F)=d-\rho([n]\setminus F)$ for all $F\in 2^{[n]}$.

A subset $F\subset [n]$ is called {\em  $\tau$-closed}, if $\tau (G)<\tau(F)$ for any proper subset $G$ of $F$, and $F$ is  called {\em $\tau$-separable} if there exist non-empty subsets $G$ and $H$ of $F$ with $G\sect H=\emptyset$ and $G\union H=F$ such that $\tau(G)+\tau(H)=\tau(F)$. If $F$ is not $\tau$-separable, then it is called $\tau$-inseparable.

With this information the  intersection presentation of polymatroidal ideal is given as follows:

\begin{Theorem}[Theorem 12,  \cite{HV}]
\label{redundant}
Let $I$ be a polymatroidal ideal associated with the discrete polymatroid $\mathcal P$ with complementary rank function $\tau$. Then
\[
I=\Sect_{F}P_F^{\tau(F)},
\]
where the intersection is taken over all $F\subset [n]$ which are $\tau$-closed and $\tau$-inseparable.
\end{Theorem}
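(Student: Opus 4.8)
The strategy is to reduce the statement to a combinatorial criterion for when a monomial belongs to $I$ and then to read off the intersection presentation, discarding the redundant primes at the end. For a monomial $u$ write $\bb=(\deg_{x_1}(u),\dots,\deg_{x_n}(u))$ for its exponent vector and, for $A\subseteq[n]$, put $\bb(A)=\sum_{i\in A}\deg_{x_i}(u)$; here $P_F=(x_i\: i\in F)$. The one point that carries real content is the membership criterion in Step~1; the passage from it to the trimmed intersection is purely formal bookkeeping with monomial ideals.

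\textbf{Step 1: membership in $I$.} Under the correspondence between polymatroidal ideals and discrete polymatroids, $I$ is generated by the base monomials of $\mathcal P$, i.e.\ by those $u$ with $\bb([n])=d:=\rho([n])$ and $\bb(A)\le\rho(A)$ for all $A$. Since $I$ is monomial, $u\in I$ iff some base monomial divides $u$, i.e.\ iff there is a base vector $\ab\le\bb$ coordinatewise. I would prove
\[
u\in I\iff \bb(A)\ge\tau(A)\ \text{ for every }A\subseteq[n].
\]
``$\Rightarrow$'' is immediate: if $\ab\le\bb$ is a base, then $\bb(A)\ge\ab(A)=d-\ab([n]\setminus A)\ge d-\rho([n]\setminus A)=\tau(A)$. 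For ``$\Leftarrow$'' I would build a base $\ab\le\bb$ by greedy augmentation, starting from $\ab=0$ and raising one coordinate at a time while keeping $\ab\le\bb$ and $\ab(A)\le\rho(A)$ for all $A$ (so in particular $\ab([n])\le d$). If this process stalled with $\ab([n])<d$, then, since the tight sets $\{A\:\ab(A)=\rho(A)\}$ are closed under unions by submodularity of $\rho$, their union $A$ would contain every $i$ with $a_i<b_i$, whence $\ab([n])=\ab(A)+\bb([n]\setminus A)=\rho(A)+\bb([n]\setminus A)\ge d$ by the hypothesis applied to $[n]\setminus A$ --- a contradiction. (Equivalently one may invoke integrality of discrete polymatroids and Edmonds' intersection theorem for $\mathcal P$ and the box $\{x\:x\le\bb\}$; see \cite{HH}.) I expect this converse to be the main obstacle, being the only place the polymatroid structure is genuinely used.

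\textbf{Step 2: the intersection and its trimming.} For any $F\subseteq[n]$, a monomial $u$ lies in the monomial ideal $P_F^{\tau(F)}$ exactly when $\bb(F)\ge\tau(F)$, so by Step~1 the monomials of $\Sect_{F\subseteq[n]}P_F^{\tau(F)}$ are precisely those of $I$; as both are monomial ideals, $I=\Sect_{F\subseteq[n]}P_F^{\tau(F)}$ (the term $F=[n]$ giving the factor $\mm^d$, since $\tau([n])=d$, and $F=\emptyset$ giving $S$). It then remains to show, by induction on $|F|$, that each $P_F^{\tau(F)}$ already contains the intersection over all $\tau$-closed, $\tau$-inseparable sets. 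If $F$ is not $\tau$-closed, choose a proper $G\subsetneq F$ with $\tau(G)\ge\tau(F)$; then $P_G\subseteq P_F$ gives $P_G^{\tau(G)}\subseteq P_G^{\tau(F)}\subseteq P_F^{\tau(F)}$, and $G$ is treated by induction. If $F=G\cup H$ is $\tau$-separable with $G\sect H=\emptyset$ and $\tau(G)+\tau(H)=\tau(F)$, then every monomial $u\in P_G^{\tau(G)}\sect P_H^{\tau(H)}$ has $\bb(F)=\bb(G)+\bb(H)\ge\tau(F)$, so $P_G^{\tau(G)}\sect P_H^{\tau(H)}\subseteq P_F^{\tau(F)}$, and $G,H$ are treated by induction; singletons, which are automatically $\tau$-closed and $\tau$-inseparable, furnish the base case. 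This completes the plan.
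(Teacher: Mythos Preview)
The paper does not prove this theorem; it is quoted verbatim from \cite{HV} (Herzog--Vladoiu) and used as a black box for the examples that follow. So there is nothing in the present paper to compare your argument against.

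That said, your proposal is correct and is essentially the argument one finds in \cite{HV}. Step~1 is the substantive part: the membership criterion $u\in I\iff \bb(A)\ge\tau(A)$ for all $A$ is exactly the dual form of the polymatroid base containment condition, and your greedy/tight-set argument (or the Edmonds intersection alternative you mention) is the standard way to establish the nontrivial implication. Step~2 is, as you say, routine bookkeeping. One tiny imprecision: singletons are not \emph{automatically} $\tau$-closed, since $\tau(\{i\})=d-\rho([n]\setminus\{i\})$ may be $0$, in which case $\tau(\emptyset)=0=\tau(\{i\})$ violates the strict inequality in the definition. This does not affect the induction, because then $P_{\{i\}}^{0}=S$ and the term is vacuous; but you should phrase the base case accordingly.
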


\begin{Example}{\em Consider the polymatroidal ideal $I_{3,2,\ldots,2}\subset K[x_1,\ldots,x_n]$. Its rank function is given by $\rho(\emptyset)=0$, $\rho(A)=2$ if $|A|=1$ and  $\rho(A)=3$ if $|A|\geq 2$. Therefore, $\tau(A)=0$ if $|A|\leq n-2$,  $\tau(A)=1$ if $|A|=n-1$ and  $\tau(A)=3$ if $|A|=n$.  It follows that $A$ is $\tau$-closed and $\tau$-inseparable, if and only if $|A|=n-1$ or $|A|=n$. Thus Theorem~\ref{redundant} implies that
\[
I_{3,2,\ldots,2}=\Sect_{A, |A|=n-1}P_A\sect \mm^3.
\]
Since $x_1x_2\in \Sect_{A,  |A|=n-1}P_A$, Corollary~\ref{chinesecopy} implies that $\sat(I_{3,2,\ldots,2})=1$.
}
\end{Example}

\end{document}